\newtheorem{theorem}{Theorem}[section]
\newtheorem{lemma}[theorem]{Lemma}
\newtheorem{proposition}[theorem]{Proposition}
\newtheorem{corollary}[theorem]{Corollary}
\theoremstyle{definition}
\newtheorem{definition}[theorem]{Definition}
\theoremstyle{remark}
\newtheorem{remark}[theorem]{Remark}
\newtheorem{fact}[theorem]{Fact}
\newtheorem{example}[theorem]{Example}
\newtheorem{observation}[theorem]{Observation}
\newtheorem{conjecture}[theorem]{Conjecture}
\newcommand{\HH}{\operatorname{H}}
\newcommand{\Ass}{\operatorname{Ass}}
\newcommand{\CMdef}{\operatorname{CMdef}}
\newcommand{\pdef}{\operatorname{pdef}}
\newcommand{\grade}{\operatorname{grade}}
\newcommand{\Spec}{\operatorname{Spec}}
\newcommand{\rad}{\operatorname{rad}}
\newcommand{\Ht}{\operatorname{ht}}
\newcommand{\pd}{\operatorname{p.dim}}
\newcommand{\G}{\operatorname{G}}
\newcommand{\Gdim}{\operatorname{Gdim}}
\newcommand{\id}{\operatorname{id}}
\newcommand{\Ext}{\operatorname{Ext}}
\newcommand{\Supp}{\operatorname{Supp}}
\newcommand{\Tor}{\operatorname{Tor}}
\newcommand{\Hom}{\operatorname{Hom}}
\newcommand{\Ann}{\operatorname{Ann}}
\newcommand{\depth}{\operatorname{depth}}
\newcommand{\lo}{\longrightarrow}
\newcommand{\fm}{\frak{m}}
\newcommand{\fp}{\frak{p}}
\newcommand{\fq}{\frak{q}}
\begin{document}

\author[]{Mohsen Asgharzadeh}

\address{}
\email{mohsenasgharzadeh@gmail.com}

\title[ ]
{ A note on Cohen-Macaulay descent }

\subjclass[2010]{ Primary  13C14; Secondary 13D07;  13D45}
\keywords{Associated primes; Cohen-Macaulay rings and modules; tensor products; perfect defect}
\begin{abstract}
We investigate the behavior of  Cohen-Macaulay defect undertaking tensor product with a perfect module.  Consequently, we study the perfect defect of a module.
As an application, we connect to associated prime ideals of tensor products.
\end{abstract}

\maketitle

\section{Introduction}

  We start with:
  
\begin{conjecture}(See \cite[Conjecture 2.11]{y}) 
	\label{1}Let $R$ be a Cohen-Macaulay  local ring, $M$ be a perfect $R$-module of positive dimension and let  $N$ be  a 
 module with $\dim (N )= \dim( R)$. If $M \otimes_R N$ is  a Cohen-Macaulay $R$-module, then $N$ is a maximal
	Cohen-Macaulay $R$-module.
\end{conjecture}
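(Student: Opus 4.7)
Let $g = \pd_R M$; since $M$ is perfect over the Cohen--Macaulay ring $R$, we have $g = \grade M$, and the Auslander--Buchsbaum formula together with the Cohen--Macaulayness of $R$ gives $\dim M = \depth M = \dim R - g$. The hypothesis $\dim M > 0$ then reads $g < \dim R$. The base case $g=0$ is immediate: $M$ is then free of positive rank, so $M \otimes_R N$ is a direct sum of finitely many copies of $N$, whence Cohen--Macaulayness of $M\otimes_R N$ transfers to $N$, and the assumption $\dim N = \dim R$ makes $N$ a maximal Cohen--Macaulay module. So I may assume $g \geq 1$ throughout.

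The plan is to establish the two identities
\[\dim(M \otimes_R N) = \dim R - g \qquad \text{and} \qquad \depth_R(M \otimes_R N) = \depth_R N - g,\]
from which Cohen--Macaulayness of $M \otimes_R N$ forces $\depth_R N = \dim R = \dim N$ directly. For the dimension identity, since $R$ is equidimensional (being Cohen--Macaulay local) and $\dim N = \dim R$, there exists a minimal prime $\fp$ of $R$ with $R/\fp \hookrightarrow N$, so that $V(\Ann M + \fp) \subseteq \Supp(M \otimes_R N)$. A prime-avoidance argument applied to a maximal regular sequence $x_1, \dots, x_g \in \Ann M$ (which lies outside $\fp$ because $\fp \notin \Supp M$ when $g \geq 1$) shows that $V(\Ann M + \fp)$ has codimension exactly $g$ inside $V(\fp)$, yielding dimension $\dim R - g$; combined with the automatic upper bound $\dim(M \otimes_R N) \leq \dim M = \dim R - g$, equality holds.

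The depth identity is the main obstacle. If $\Tor^R_i(M, N) = 0$ for all $i \geq 1$, this is Auslander's depth formula; in the absence of such vanishing, the naive bound can fail in either direction, so a refined argument is needed. The approach I would take is to exploit the finite free resolution $F_\bullet \to M$ of length $g$, tracking the depth $\depth(F_i \otimes_R N) = \depth_R N$ through the syzygy short exact sequences via the depth lemma, and combining this with Auslander's theorem on the depth of the highest non-vanishing $\Tor^R_i(M, N)$ for a module of finite projective dimension. The delicate point is to show that the Cohen--Macaulay defect of the entire tensor complex can be distributed in a way that forces $\CMdef(N) = 0$. This is precisely where the Cohen--Macaulay-defect machinery developed elsewhere in the paper is expected to intervene, converting the problem into a tractable comparison of defects between $N$, $M$, and $M \otimes_R N$, and pinning down the defects of the intermediate $\Tor^R_i(M, N)$ that would otherwise obstruct the depth identity.
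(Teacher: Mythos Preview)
Your proposal attempts to \emph{prove} Conjecture~\ref{1}, but the paper does not prove it --- it refutes it. Example~\ref{e} exhibits, over any Cohen--Macaulay local ring $(R,\fm)$ of dimension $d>2$, a perfect module $M=R/I$ with $I=(x_1,\dots,x_{d-1})$ generated by part of a system of parameters, and $N=I$, such that $M\otimes_RN\cong I/I^2$ is Cohen--Macaulay while $\depth_R(N)\le 2<d$. So there can be no correct proof of the stated conjecture, and the task is to locate where your outline breaks.

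The gap is exactly where you flagged it: the depth identity $\depth_R(M\otimes_RN)=\depth_R N-g$. This is Auslander's depth formula, valid only when $\Tor^R_+(M,N)=0$; you acknowledge this and then appeal to ``Cohen--Macaulay-defect machinery developed elsewhere in the paper'' to circumvent the Tor obstruction. No such machinery exists. On the contrary, Remark~\ref{tor} computes $\Tor^R_1(M,N)\neq0$ in the counterexample when $d>2$, and the paper's Remark in \S3 (the one restating the conjecture in obstruction format for $\pd(M)=d-1$) makes this precise: under the given hypotheses, $N$ is maximal Cohen--Macaulay \emph{if and only if} $\Tor^R_+(M,N)=0$. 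Thus the vanishing of Tor is not a technicality to be massaged away but is equivalent to the conclusion you seek. Your dimension identity $\dim(M\otimes_RN)=\dim R-g$ is correct (it is Fact~\ref{YD}(ii)), and the base case $g=0$ is fine; but the depth step cannot be completed in general, and the counterexample shows no alternative route can succeed without additional hypotheses such as those in Proposition~\ref{pone}.
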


 Yoshida proved Conjecture \ref{1} under the additional assumption that $N$ is surjective Buchsbaum. 
Also, without loss of the generality, we may assume that $d:=\dim R\geq 2$. 

 In Example \ref{e} we give a negative answer to the conjecture over any Cohen-Macaulay local ring of dimension $d>2$.
  The module $M$ in Example \ref{e} is of projective dimension at least two.  In Section 3
  we show:
 \begin{proposition} 
 	Let $R$ be a Cohen-Macaulay local  ring with isolated singularity of dimension $d$, $M$ be perfect of projective dimension at most one, and let $N$ be  torsion-free. If $M\otimes_R N$ is 	Cohen-Macaulay, then  $N$ is maximal Cohen-Macaulay.
 \end{proposition}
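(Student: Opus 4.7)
The plan is the following. The case $\pd M = 0$ is immediate: $M \cong R^{r}$ for some $r \geq 1$, so $M \otimes_R N \cong N^{r}$ is Cohen-Macaulay exactly when $N$ is, and torsion-freeness forces $\dim N = d$, hence $N$ is maximal Cohen-Macaulay. All the content lies in the case $\pd M = 1$, where I pick a minimal free resolution
\[
0 \lo R^{m} \xrightarrow{\varphi} R^{n} \lo M \lo 0,
\]
so that every entry of $\varphi$ lies in $\fm$.

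The key technical step is to establish $\Tor_1^R(M, N) = 0$. Since $M$ is perfect of projective dimension one, $\grade(\Ann M) = 1$, so $\Ann M$ contains an $R$-regular element $f$. Torsion-freeness of $N$ gives $\Ass N \subseteq \Ass R$, so $f$ remains a non-zero-divisor on $N^{m}$; but $f$ annihilates $M$, hence also $\Tor_1^R(M, N)$. Since $\Tor_1^R(M, N)$ embeds into the torsion-free module $N^{m}$ and is killed by the non-zero-divisor $f$, it must vanish. Thus
\[
0 \lo N^{m} \xrightarrow{\varphi \otimes 1} N^{n} \lo M \otimes_R N \lo 0
\]
is short exact. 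Applying $\Ext_R^{i}(k, -)$ and using that $\fm$ annihilates each $\Ext_R^{i}(k, -)$ while the entries of $\varphi$ lie in $\fm$, the induced map $\Ext_R^{i}(k, N)^{m} \to \Ext_R^{i}(k, N)^{n}$ is zero for every $i$. The long exact sequence therefore splits into short exact pieces
\[
0 \lo \Ext_R^{i}(k, N)^{n} \lo \Ext_R^{i}(k, M \otimes_R N) \lo \Ext_R^{i+1}(k, N)^{m} \lo 0.
\]

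Since $M$ is perfect of grade one, it is itself Cohen-Macaulay of dimension $d - 1$. The remaining input needed is the equality $\dim(M \otimes_R N) = d - 1$, which amounts to $\Supp N = \Spec R$. This is where the isolated singularity hypothesis enters: by Hartshorne's connectedness theorem, $\Spec R \setminus \{\fm\}$ is connected for $d \geq 2$, and isolated singularity forces each non-maximal prime to lie above a unique minimal prime of $R$ (since $R_{\fp}$ is then a regular local domain); together these imply that $R$ has a unique minimal prime and is reduced, hence is a domain, so every nonzero torsion-free $R$-module has full support. The cases $d \leq 1$ can be handled directly. Granted $\dim(M \otimes_R N) = d - 1$, the Cohen-Macaulay hypothesis on $M \otimes_R N$ gives $\Ext_R^{i}(k, M \otimes_R N) = 0$ for $i < d - 1$, and the short exact sequences above then yield $\Ext_R^{j}(k, N) = 0$ for every $j \leq d - 1$, so $\depth N \geq d$ and $N$ is maximal Cohen-Macaulay. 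The two substantive points are the $\Tor_1$ vanishing and the dimension verification; the remainder is formal bookkeeping with the Ext long exact sequence.
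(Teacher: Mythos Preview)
Your proof is correct, but it proceeds quite differently from the paper's. The paper argues by induction on $d$: the isolated singularity hypothesis is used to conclude, via the inductive step, that $\Tor_1^R(M,N)$ has finite length, and then positive depth of $N$ forces it to vanish; from there the paper invokes Auslander's depth formula together with Yoshida's lemma $\dim(M\otimes_R N)=\dim M$ (which rests on the intersection theorem). Your argument replaces each of these three ingredients. For the $\Tor_1$ vanishing you give a direct and rather elegant argument using only that $\grade M=1$ and that $N$ is torsion-free---no induction and, notably, no isolated singularity is needed at this step. For the depth computation you carry out the $\Ext^*_R(k,-)$ long exact sequence by hand, exploiting that the entries of a minimal presentation lie in $\fm$; this effectively reproves the depth formula in the special case $\pd M=1$. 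Finally, for the dimension equality you take a more scenic route, using Hartshorne connectedness plus isolated singularity to show that $R$ is actually a domain, whence $\Supp N=\Spec R$. This last step is correct but heavier than necessary: since $N$ torsion-free already gives $\dim N=\dim R$, the paper's Fact~\ref{YD}(ii) (or the intersection theorem directly) yields $\dim(M\otimes_R N)=\dim M$ without knowing that $R$ is a domain. In fact, combining your $\Tor_1$ argument with that dimension lemma would eliminate the isolated singularity hypothesis altogether in the $\pd M\le 1$ case---a genuine strengthening that your approach makes visible.
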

 
  Also, we present some partial positive answers in the case $\pd(M)=2$ and give the corresponding obstruction.
  As an application, and in Section 4, 
 we compute Cohen-Macaulay defect of tensor product,  and compare it with the perfect defect.
 It may be interesting to find the situation for which these defects are coincide to each other. We determine such a situation,  see Corollary \ref{cmc}.  Despite of  simplicity  of $\Ass(\Hom(-,\sim))$ and $\Supp(-\otimes_R\sim)$,  it seems $\Ass(-\otimes_R\sim)$ rarely  computed in  literature, even in some special forms.
 For an achievement, see \cite[Theorem 23.2]{mat} where one of the modules is relatively flat. As a typical example, see  e.g. \cite[Corollary 1.6]{a}.
 Here,
 we show:

 \begin{corollary} \label{1.3}
 Let $R$ be local,  $M$  of finite projective dimension  and $N$ be maximal  Cohen-Macaulay. 
 The following holds: 	
\begin{enumerate}
 \item[i)] $\Ass(M\otimes_R N)\subseteq \Ass(M)\cap\Supp(N)$.
 \item[ii)]  If $R$ is Cohen-Macaulay, then $\Ass(M\otimes_R N)= \Ass(M)\cap\Supp(N)$. 
\item[iii)] If $M$ becomes an ideal  of positive height, then   $\Ass(M\otimes_R N)=\Ass(N)$.
\item[iv)] If $M$  is Cohen-Macaulay then  $\Ass(M\otimes_R N)= \Ass(M)\cap\Supp(N)$. In particular, $\Ass(M\otimes_R N)=\Ass(M)$ provided  
$N$ is fully supported (e.g. $R$ is a  domain).	
 \item[v)] Assume in addition to ii) that $R$ is of prime characteristic and $F$-finite, then $\Ass(F^n(M))=\Ass(M)$.
\end{enumerate}	
 \end{corollary}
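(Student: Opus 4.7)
The plan is to reduce everything to a depth computation via the standard fact that $\fp\in\Ass_R X$ iff $\depth_{R_\fp} X_\fp=0$. The key input is a depth relation of the form
\[\depth(M\otimes_R N)\;=\;\depth M+\depth N-\depth R\]
(the Auslander depth formula) or at least sharp one-sided versions of it, which should be available under the hypotheses $\pd_R M<\infty$ and $N$ MCM from the CM-defect/perfect-defect analysis carried out in Section~4.

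For part~i), take $\fp\in\Ass(M\otimes_R N)$. Then $\fp\in\Supp(M\otimes_R N)\subseteq\Supp M\cap\Supp N$ already gives $\fp\in\Supp N$. Localizing at $\fp$ and combining the depth inequality with Auslander--Buchsbaum forces $\depth M_\fp=0$, i.e., $\fp\in\Ass M$. For part~ii), the additional CM hypothesis is precisely what makes the depth formula an equality at every prime in the support and guarantees that $N_\fp$ remains MCM over $R_\fp$; so for $\fp\in\Ass M\cap\Supp N$, the same formula delivers $\depth(M\otimes_R N)_\fp=0$, giving the reverse inclusion.

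Parts iii)--v) are then read off from ii). For iii), I first check that an ideal $I$ of positive height in a CM local ring satisfies $\Ass I=\Min R$: each $\fp\in\Min R$ is $\Ann(y)$ for some $y\in R$, and picking $a\in I\setminus\fp$ one has $ay\in I$ with $\Ann(ay)=\fp$. Since an MCM $N$ over CM $R$ satisfies $\Ass N=\Min R\cap\Supp N$, ii) gives $\Ass(I\otimes N)=\Min R\cap\Supp N=\Ass N$. Part iv) is a direct specialization of ii), and the ``in particular'' follows since a fully supported $N$ makes $\Supp N=\Spec R$; over a domain, MCM modules are torsion-free and hence fully supported. For v), apply ii) with $N={}^{\phi^n}\!R$, which is finitely generated by $F$-finiteness, MCM because $R$ itself is, and fully supported; since $F^n(M)\cong M\otimes_R{}^{\phi^n}\!R$ one obtains $\Ass F^n(M)=\Ass M\cap\Spec R=\Ass M$.

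The main obstacle is securing the depth relation in the precise form needed for part~i), where $R$ is only assumed local rather than CM. The classical Auslander formula requires $\Tor_i^R(M,N)=0$ for all $i\ge1$, which is not given here; the argument must therefore extract the correct one-sided depth inequality from the perfect-defect framework of Section~4, exploiting MCM-ness of $N$ to bypass $\Tor$-vanishing while still producing the one direction of the depth comparison needed for the containment. Once that inequality is in hand, the rest of the corollary unwinds cleanly.
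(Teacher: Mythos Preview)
Your overall strategy---reduce to depth at a prime and invoke an Auslander-type depth relation---is the same as the paper's. But two points need correction.

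First, the ``main obstacle'' you flag is not an obstacle. Lemma~\ref{big} in Section~4 proves that $\Tor^R_+(M,N)=0$ whenever $\pd_R M<\infty$ and $N$ is maximal Cohen--Macaulay; this is exactly what makes the Auslander depth formula available as an \emph{equality}, not merely a one-sided bound (see Proposition~\ref{re}). You also need that $N_\fp$ remains maximal Cohen--Macaulay for every $\fp\in\Supp(N)$; the paper proves this without any Cohen--Macaulay hypothesis on $R$ (Corollary~\ref{crew}(i)), so it is available for part~i) as well as part~ii). With these two inputs your arguments for i), ii) and v) go through as written.

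Second, and this is a genuine gap: parts iii) and iv) do \emph{not} carry the hypothesis that $R$ is Cohen--Macaulay, so you cannot read them off from ii). For iv) the paper instead uses the two-sided estimate $\depth(M)\le\depth(M\otimes_R N)\le\dim(M)$ of Proposition~\ref{re}(i), which collapses to an equality when $M$ is Cohen--Macaulay; localizing then gives both inclusions exactly as in the proof of ii). For iii) the paper's route is entirely different from yours: it tensors $0\to I\to R\to R/I\to 0$ with $N$, uses $\Tor_1^R(R/I,N)=0$ (Lemma~\ref{big}) to get $0\to I\otimes_R N\to N\to N/IN\to 0$, which immediately gives $\Ass(I\otimes_R N)\subseteq\Ass(N)$. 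For the reverse inclusion it applies part~i) to $R/I$ to obtain $\Ass(N/IN)\subseteq\Ass(R/I)$, and then observes that any $\fp\in\Ass(N)$ satisfies $\dim R/\fp=\dim R$ (since $N$ is MCM), hence $\fp\notin\Ass(R/I)$ because $\Ht(I)>0$; the standard inclusion $\Ass(N)\subseteq\Ass(N/IN)\cup\Ass(I\otimes_R N)$ then forces $\fp\in\Ass(I\otimes_R N)$. Your argument via $\Ass(I)=\Min(R)$ is correct but only under the extra assumption that $R$ is Cohen--Macaulay, so it proves a weaker statement than what is claimed.
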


Part  v) is well-known and presented here because of the motivation,  it is true without Cohen-Macaulay and $F$-finite assumptions, see e.g. \cite[Corollary 1.6]{HS} by Huneke-Sharp.
Under the extra assumption $M$ is primary, i.e.  $\Ass(M)$ is singleton, Corollary \ref{1.3}(i) implies that
$\Ass(M\otimes_R N)=\Ass(M)$. 
Also, we present a  connection to the support of  maximal  Cohen-Macaulay modules, see Corollary \ref{supp} and \ref{crew}.	Let  $M$ be of finite projective dimension over a local ring, and let $\Sigma$ be the class
of all maximal Cohen-Macaulay modules. It follows by the above corollary that $\bigcup_{L\in\Sigma}\Ass(M\otimes_R L)$ is finite.

Suppose the module $M$ from Corollary \ref{1.3}(i) is primary and cyclic. The existence  of  such a
module may put  some restrictions on the ring. In fact,
Peskine and Szpiro    \cite[Corollary II.3.3]{PS2}   proved that the existence  of  a prime ideal
of finite projective dimension implies the ring is an integral domain.
The corresponding infinitesimal version may not be true.
For example, let $R:=k[X]/(X^2)$ and $\fp:=(x)$. Then $\fp^2=\fp^{(2)}=0$, and consequently  $\fp^{(2)}$
is free. But, $R$ is not an integral domain. Despite of this we have:

\begin{observation} 
	Let $R$ be a  local  ring,  $\fp \in\Spec(R)$ such that $\pd(R/\fp^{(n)}) < \infty$ for some  $n\in\mathbb{N}$. Adopt one of the following situations:
	 	\begin{enumerate}
		\item[i)] $\fp^{(n)}\neq 0$, or
		\item[ii)]   $R$ is reduced.	
	\end{enumerate}	
	 Then  $R$ is an integral domain.
\end{observation}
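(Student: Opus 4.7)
The plan is to split on the two alternative hypotheses and reduce everything to the case $\mathfrak{p}^{(n)}\neq 0$. If (ii) holds and $\mathfrak{p}^{(n)}=0$, then $\mathfrak{p}^n\subseteq\mathfrak{p}^{(n)}=0$ forces $\mathfrak{p}=0$ by reducedness, so $(0)$ is prime and $R$ is a domain. In every remaining situation $\mathfrak{p}^{(n)}\neq 0$. The ideal $\mathfrak{p}^{(n)}$ is then a proper $\mathfrak{p}$-primary ideal, so $\Ass(R/\mathfrak{p}^{(n)})=\{\mathfrak{p}\}$, and the set of zero-divisors of $R/\mathfrak{p}^{(n)}$ is exactly $\mathfrak{p}$.

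The main tool I would invoke is the Auslander Zero-Divisor Theorem (proved by Peskine--Szpiro in equal characteristic and by Roberts in general via the New Intersection Theorem): since $\pd(R/\mathfrak{p}^{(n)})<\infty$ and $R/\mathfrak{p}^{(n)}\neq 0$, every zero-divisor of $R$ is a zero-divisor of $R/\mathfrak{p}^{(n)}$ and therefore lies in $\mathfrak{p}$. In particular every associated prime of $R$ is contained in $\mathfrak{p}$. Next I would localize at $\mathfrak{p}$. First I rule out $\mathfrak{p}$ being a minimal prime of $R$: if it were, the previous step gives $\Ass(R)=\{\mathfrak{p}\}$, the map $R\hookrightarrow R_\mathfrak{p}$ is injective, and $\pd_{R_\mathfrak{p}}(R_\mathfrak{p}/\mathfrak{p}^n R_\mathfrak{p})<\infty$ over the Artinian ring $R_\mathfrak{p}$ forces this cyclic quotient to be free, hence $\mathfrak{p}^n R_\mathfrak{p}=0$, whence $\mathfrak{p}^{(n)}=\ker(R\to R_\mathfrak{p})=0$, contradicting the case (i) hypothesis. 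So $\dim R_\mathfrak{p}\geq 1$ and $\mathfrak{p}^n R_\mathfrak{p}$ is a proper nonzero ideal of $R_\mathfrak{p}$ of finite projective dimension. By the Levin--Vasconcelos theorem (a local ring is regular as soon as some nonzero power of its maximal ideal has finite projective dimension), $R_\mathfrak{p}$ is a regular local ring, in particular a domain, and $R$ possesses a unique minimal prime $\mathfrak{q}_0$ contained in $\mathfrak{p}$, corresponding to $(0)$ in $R_\mathfrak{p}$.

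To finish, I would show $\mathfrak{q}_0=0$. Because $\mathfrak{q}_0 R_\mathfrak{p}=0$ and $\mathfrak{q}_0$ is finitely generated, some $s\in R\setminus\mathfrak{p}$ annihilates $\mathfrak{q}_0$. If $\mathfrak{q}_0\neq 0$, pick any $0\neq a\in\mathfrak{q}_0$; then $sa=0$ exhibits $s$ as a zero-divisor of $R$, and the Auslander Zero-Divisor Theorem now forces $s\in\mathfrak{p}$, a contradiction. Therefore $\mathfrak{q}_0=0$, so $(0)$ is prime in $R$ and $R$ is a domain. The main obstacle I anticipate is the appeal to Levin--Vasconcelos, which is itself a nontrivial theorem; a cleaner alternative would be to deduce $\pd(R/\mathfrak{p})<\infty$ directly from $\pd(R/\mathfrak{p}^{(n)})<\infty$ (for instance via the filtration by symbolic powers) and then apply the cited Peskine--Szpiro result, but I do not see a simple way to push such a reduction through.
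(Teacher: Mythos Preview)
Your argument is correct and rests on the same two external inputs as the paper: Auslander's zero-divisor theorem and the Levin--Vasconcelos theorem. The paper's organization is a bit more economical. Rather than first splitting off the case ``$\fp$ minimal'' and later returning to kill the unique minimal prime $\fq_0$, the paper establishes at the very start that the localization map $R\to R_\fp$ is injective: if $r$ lies in the kernel then $xr=0$ for some $x\notin\fp$; such $x$ is $R/\fp^{(n)}$-regular (as $\fp^{(n)}$ is $\fp$-primary), hence $R$-regular by the zero-divisor theorem, so $r=0$. Injectivity then gives $\fp^n R_\fp\neq 0$ directly from $\fp^{(n)}\neq 0$, Levin--Vasconcelos makes $R_\fp$ regular, and the embedding $R\hookrightarrow R_\fp$ finishes the proof in one line. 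Your step ruling out $\fp$ minimal and your final paragraph showing $\fq_0=0$ are, taken together, exactly this injectivity argument split into two pieces; merging them recovers the paper's proof. Your handling of case~(ii) is the same reduction as the paper's (if $\fp^{(n)}=0$ and $R$ is reduced then $\fp=0$).
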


It  may be worth to mention that the dual version of this observation   (working with injective dimension)  imposes
more restrictions on the ring. For the precise statement, see Corollary \ref{id}.

 
\section{Negative side of  Conjecture \ref{1}}

The notation $(R,\fm,k)$ stands for a   commutative, noetherian and local ring, with the maximal ideal $\fm$ and  the residue field  $k$. Also, modules are finitely generated, otherwise specialized.
  The notation $\pd_R(-)$ stands for projective dimension of an $R$-module $(-)$. By $\grade_R (M)$ we mean $\grade_R (\Ann M, R),$ i.e.,  $$\grade_R (M)=\inf\{i:\Ext^i_R(M,R)\neq 0\}.$$
Recall that a non-zero  module
$M$ is called perfect if $\pd_R (M) = \grade_R (M)$. So, $\pd_R (M) < \infty$.
For simplicity of the reader, we bring the following well-known and easy facts:

\begin{fact}
	Let $f: R \to S$ be a homomorphism of  local rings,
	and $M$ an $S$-module which is finite as an $R$-module.	Then $\depth_R(M)=\depth_S(M)$.
\end{fact}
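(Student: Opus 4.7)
The strategy is to prove both inequalities $\depth_R(M) \leq \depth_S(M)$ and $\depth_S(M) \leq \depth_R(M)$ by comparing maximal $M$-regular sequences in $\fm_R$ and $\fm_S$. The first direction is immediate: any $M$-regular sequence $x_1,\dots,x_n \in \fm_R$ maps via $f$ to $f(x_1),\dots,f(x_n) \in \fm_S$ (using that $f$ is local), and remains $M$-regular because the $R$- and $S$-actions on $M$ coincide under $f$.

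For the reverse inequality, I would first reduce to the case where $S$ is module-finite over $R$. Let $S'$ denote the image of the natural ring homomorphism $S \to \operatorname{End}_R(M)$. Since $M$ is finite over $R$, the determinant trick (Cayley--Hamilton) shows that each element of $S'$ is integral over the image of $R$, so $S'$ is module-finite over $R$; as a quotient of a local ring, $S'$ is local with maximal ideal $\fm_{S'}$ equal to the image of $\fm_S$. Because the $S$-action on $M$ factors through $S'$, a sequence in $\fm_S$ is $M$-regular precisely when its image in $\fm_{S'}$ is, and any $\fm_{S'}$-sequence lifts to $\fm_S$; hence $\depth_S(M) = \depth_{S'}(M)$. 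Thus we may assume from the outset that $S$ itself is module-finite over $R$.

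Under this assumption, $S/\fm_R S$ is a finite-dimensional algebra over $k=R/\fm_R$, hence Artinian, and being local its maximal ideal $\fm_S/\fm_R S$ is nilpotent, so $\fm_S^n \subseteq \fm_R S$ for some $n$ and therefore $\sqrt{\fm_R S} = \fm_S$. Since grade of a finitely generated module on a finitely generated ideal depends only on the radical of that ideal,
\[
\depth_S(M) \;=\; \grade_S(\fm_S, M) \;=\; \grade_S(\fm_R S, M) \;=\; \grade_R(\fm_R, M) \;=\; \depth_R(M),
\]
where the third equality expresses that $M$-regular sequences for the $R$-action and those for the $S$-action agree under $f$. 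The main technical step is the reduction to module-finite $S$ via the Cayley--Hamilton integrality argument; once one is in that setting, the Artinian-quotient observation and the radical-invariance of grade close the argument quickly.
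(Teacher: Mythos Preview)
The paper does not prove this fact: it is simply recorded as a ``well-known and easy'' statement without argument, so there is no proof to compare against. Your argument is sound and follows one of the standard routes.

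One small correction: the inference ``each element of $S'$ is integral over the image of $R$, so $S'$ is module-finite over $R$'' is not valid as written, since elementwise integrality alone does not force module-finiteness. The conclusion you want is easier than Cayley--Hamilton: because $R$ is noetherian and $M$ is finite over $R$, the ring $\operatorname{End}_R(M)$ is a finite $R$-module, and $S'$ is an $R$-submodule of it, hence finite over $R$. With that fix the rest of your chain
\[
\depth_S(M)=\grade_S(\fm_S,M)=\grade_S(\fm_R S,M)=\grade_R(\fm_R,M)=\depth_R(M)
\]
goes through; the third equality is cleanest via the Koszul description of grade, since the Koszul complex on generators of $\fm_R$ acting on $M$ is literally the same complex whether one views $M$ over $R$ or over $S$.

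You might also note, for perspective, that the reduction to module-finite $S$ can be bypassed: $M/\fm_R M$ is finite over the field $R/\fm_R$, hence of finite length over $S$, so $\fm_S^n M\subseteq \fm_R M$ for some $n$. Combined with independence of base for local cohomology this gives $\HH^i_{\fm_R}(M)\cong\HH^i_{\fm_S}(M)$ directly, and the equality of depths follows from the local-cohomology characterisation. Your route via $S'$ is equally valid and arguably more elementary.
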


\begin{fact}\label{22}
Let  $R$ be a Cohen-Macaulay local ring, and $M$ be 
of finite projective dimension.
Then $M$ is perfect if and only if  $M$  is Cohen-Macaulay.
\end{fact}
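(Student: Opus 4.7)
The plan is to prove Fact 2.2 by combining the Auslander--Buchsbaum formula with the standard identification of grade and codimension over a Cohen--Macaulay ring. Since $\pd_R(M)<\infty$ by hypothesis, all the inputs are available.

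First I would invoke the Auslander--Buchsbaum formula, which gives
\[
\pd_R(M) + \depth_R(M) = \depth(R).
\]
Because $R$ is Cohen--Macaulay, $\depth(R) = \dim(R)$, so
\[
\pd_R(M) = \dim(R) - \depth_R(M).
\]

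Next I would compute the grade. By the definition stated in the paper, $\grade_R(M) = \grade_R(\Ann M, R)$, and over a Cohen--Macaulay local ring this grade coincides with the codimension: $\grade_R(M) = \dim(R) - \dim(R/\Ann M) = \dim(R) - \dim(M)$. (This is the standard fact that in a Cohen--Macaulay ring every ideal is unmixed up to the height-computation $\grade = \Ht$, and $\Ht(\Ann M) = \dim R - \dim(R/\Ann M)$.)

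Combining the two displays, the equality defining perfectness, $\pd_R(M) = \grade_R(M)$, becomes
\[
\dim(R) - \depth_R(M) = \dim(R) - \dim_R(M),
\]
which simplifies to $\depth_R(M) = \dim_R(M)$, i.e.\ $M$ is Cohen--Macaulay. Reading the implications in both directions gives the biconditional. There is no real obstacle here; the only point worth flagging carefully is the identity $\grade_R(M) = \dim R - \dim M$, which requires $R$ to be Cohen--Macaulay (otherwise one only has the inequality $\grade_R(M) \le \dim R - \dim M$), and this is exactly where the hypothesis on $R$ is consumed.
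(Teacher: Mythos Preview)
Your argument is correct and is exactly the standard proof of this well-known fact. The paper itself does not supply a proof of Fact~2.2; it is simply recorded (alongside Fact~2.1) as one of the ``well-known and easy facts'' recalled for the reader's convenience, so there is nothing further to compare.
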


A module $M$ is called \it{quasi-perfect}  if
$\inf\{i:\Ext^i_R(M,R)\neq 0\}=\sup\{i:\Ext^i_R(M,R)\neq 0\}$.
The monograph \cite{AB} is our reference for the concept of  $\G$-dimension.

\begin{observation} 
	Let $R$ be a	 local ring and $M$ be Cohen-Macaulay module of finite $\G$-dimension. Then $\dim (M)+\grade(M)=\depth (R)$
	and $M$ is quasi-perfect.
\end{observation}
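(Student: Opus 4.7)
The plan is to derive both conclusions from a single assertion: $\grade(M) = \Gdim_R(M)$. Assuming this, the Auslander-Bridger formula
$$\Gdim_R(M) + \depth_R(M) = \depth(R),$$
combined with $\depth_R(M) = \dim(M)$ from the Cohen-Macaulay hypothesis, immediately yields $\dim(M) + \grade(M) = \depth(R)$. Moreover, since $\Gdim_R(M) < \infty$ implies $\Gdim_R(M) = \sup\{i : \Ext^i_R(M, R) \neq 0\}$, the equality $\grade(M) = \Gdim_R(M)$ is exactly the quasi-perfect condition.

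Writing $g := \Gdim_R(M)$, the inequality $\grade(M) \leq g$ is immediate from the characterization above, so the content lies in $\grade(M) \geq g$, i.e.\ in $\Ext^i_R(M, R) = 0$ for $i < g$. I would argue by induction on $\dim(M)$. When $\dim(M) = 0$, the module $M$ has finite length, so $\sqrt{\Ann M} = \fm$; hence $\grade(M) = \grade(\fm, R) = \depth(R)$, and Auslander-Bridger (with $\depth_R(M) = 0$) gives precisely $\depth(R) = g$. This subsumes the totally reflexive base case $g = 0$, where additionally $\Ext^{\geq 1}_R(M, R) = 0$ and $\Hom_R(M, R) \neq 0$ since $M \cong M^{**} \neq 0$.

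For the inductive step $\dim(M) \geq 1$, the Auslander-Bridger formula forces $\depth(R) \geq \depth_R(M) \geq 1$, so by prime avoidance one chooses $x \in \fm$ that is regular on both $R$ and $M$. A standard change-of-rings fact for $\G$-dimension gives $\Gdim_{R/x}(M/xM) = g$, and $M/xM$ is Cohen-Macaulay over $R/x$ with $\dim_{R/x}(M/xM) = \dim(M) - 1$. By the inductive hypothesis, $\Ext^i_{R/x}(M/xM, R/x) = 0$ for $i < g$. The isomorphism $\Ext^i_R(M, R/x) \cong \Ext^i_{R/x}(M/xM, R/x)$ (valid since $x$ is $M$-regular), together with the long exact sequence coming from $0 \to R \xrightarrow{x} R \to R/x \to 0$ applied to $\Hom_R(M, -)$, then yields $x \cdot \Ext^i_R(M, R) = \Ext^i_R(M, R)$ for $i < g$, and Nakayama applied to the finitely generated module $\Ext^i_R(M, R)$ kills these groups.

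The main obstacle is invoking the correct behaviour of $\G$-dimension under reduction modulo a doubly regular element, namely $\Gdim_{R/x}(M/xM) = \Gdim_R(M)$, which preserves $g$ while decreasing $\dim(M)$, so that the induction on $\dim(M)$ actually terminates. Once this change-of-rings fact is in hand, the remaining work is the routine Nakayama and long exact sequence computation above.
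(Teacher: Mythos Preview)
Your argument is correct, but it takes a genuinely different route from the paper's. The paper proceeds by a short sandwich of inequalities: it quotes the Peskine--Szpiro bound $\dim(M)+\grade(M)\geq\depth(R)$ \cite[Lemma I.4.8]{PS2}, combines it with $\grade(M)\leq\Gdim(M)$ and the Auslander--Bridger formula, and reads off
\[
\dim(M)\;\geq\;\depth(R)-\grade(M)\;\geq\;\depth(R)-\Gdim(M)\;=\;\depth_R(M)\;=\;\dim(M),
\]
forcing equality throughout. No induction, no change-of-rings, no Nakayama --- just one cited inequality plus the two standard facts about $\Gdim$ that you also invoke.

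Your approach trades the Peskine--Szpiro input for the change-of-rings identity $\Gdim_{R/x}(M/xM)=\Gdim_R(M)$ (for $x$ regular on $R$ and $M$) and then runs an honest induction on $\dim(M)$, killing the lower $\Ext$ groups via the long exact sequence and Nakayama. This is longer but more self-contained in a different sense: it avoids appealing to the nontrivial inequality $\dim(M)+\grade(M)\geq\depth(R)$, at the cost of importing a structural fact about $\G$-dimension under reduction. Either dependency is standard, so both proofs are legitimate; the paper's is simply shorter once one is willing to cite Peskine--Szpiro.
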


\begin{proof}
	 Recall from \cite[Lemma I.4.8]{PS2} that  $\dim M+\grade(M)\geq\depth(R)$.
	Also, in the case  $\G$-dimension is finite we have	$\Gdim_R M=\sup\{i:\Ext^i_R(M,R)\neq 0\}$, see  \cite{AB}. In particular,
	$\grade(M)\leq\Gdim(M)$. We apply these along with Auslander-Bridger formula  to observe that
	\[\begin{array}{ll}
	\dim M&\geq\depth(R)-\grade(M)\\
	&\geq\depth(R)-\Gdim(M)\\
	&=\depth_R (M)\\
	&=\dim (M).
	\end{array}\]
	So, $\dim (M)+\grade(M)=\depth(R)$ and 	$\grade(M)=\Gdim(M)$, i.e., $M$ is quasi-perfect.
\end{proof}

By
$\HH^i_{\fm}(-)$ we mean the $i$-th local
cohomology module of $(-)$ with respect to $\fm$.

\begin{example}\label{e}
	Let $(R,\fm)$ be a Cohen-Macaulay local ring of dimension $d>1$ and let $x_1,\ldots, x_d$
	 be a system of parameters and let $N:=I:=(x_1,\ldots,x_{d-1})$. The following holds: 	
\begin{enumerate}
\item[i)] $M:=R/I$ is perfect,
\item[ii)]   $\dim (N )= \dim (R)$ and  $M \otimes_R N$ is Cohen-Macaulay as an $R$-module,
\item[iii)] if $d>2$, then  $N$ is not maximal Cohen-Macaulay as an $R$-module,
\item[iv)] if $d=2$, then 	$N$ is   maximal 	Cohen-Macaulay as an $R$-module.
	\end{enumerate}
\end{example}

\begin{proof} i):	Clearly, $M$ resolves by the Koszul complex with respect to $x_1,\ldots,x_{d-1}$.
		In view of Fact \ref{22}  $M$ is perfect, because it is Cohen-Macaulay. 
		
	ii):
Let $\fp\subset I$ be a minimal prime ideal. It follows that $N$ is 
	of dimension equal to $\dim R$, because $\fp\in\Ass(N)$ and $R$ is equidimensional.
	 Since $I$ is generated by a regular sequence, $\frac{I}{I^2}$ is free as 
	 an $R/I$-module. It is easy to see $R/I$ is Cohen-Macaulay.
	From this, $\frac{I}{I^2}$ is Cohen-Macaulay as an $R/I$-module. Note
	that dimension of $\frac{I}{I^2}$ does not change as an $R$-module. We apply Fact 2.1 to deduce that
	$\frac{I}{I^2}$ is Cohen-Macaulay as an $R$-module. Note that $M \otimes_R N=I/I^2$.

iii): Suppose $d>2$.	Here, we claim that $N$ is not Cohen-Macaulay. Indeed, look at $0\to I\to R\to R/I\to 0$.
		In view of the long exact sequence of local cohomology
		modules we have $$\HH^1_{\fm}(R)\lo \HH^1_{\fm}(R/I)\lo\HH^2_{\fm}(I)\quad(\ast)$$
		Due to Grothendieck's non-vanishing theorem, we know  $\HH^1_{\fm}(R/I)\neq 0$.
		By local cohomology characterization of depth, $\HH^1_{\fm}(R)=0$. We put this
		into $(\ast)$ and we deduce that $0\neq \HH^1_{\fm}(R/I)\subset\HH^2_{\fm}(I).$
	 By local cohomology characterization of depth, $\depth(I)\leq 2<d$. Since $d=\dim( N)$
	we observe that  $N$  is not Cohen-Macaulay, viewed as an $R$-module.
	
iv):	Suppose $d=2$. Note that $M \otimes_R N=N/x_1N$. Since $x_1$ is regular over $A$, it is regular over its
 submodules. In particular, $x_1$ is $N$-regular.  Recall from  item ii) that  $N/x_1N$  is Cohen-Macaulay as an $R$-module. From these, $N$ is Cohen-Macaulay as an $R$-module. 
 Since $\dim (N)=d$, and by definition, $N$ is maximal
	Cohen-Macaulay as an $R$-module.
\end{proof}

By $\Tor^R_+(M,N)$ we mean $\oplus_{i>0}\Tor^R_i(M,N)$. The following inspired us to present \S 3.
\begin{remark}\label{tor}
	Adopt the notation of Example \ref{e}. The following holds:
	\begin{enumerate}
		\item[i)]  if $d>2$, then  $\Tor^R_+(M,N)\neq0$,
		\item[ii)] if $d=2$, then 	$\Tor^R_+(M,N)=0$.
	\end{enumerate}
Indeed, assume $d>2$ and  apply $-\otimes_R I$ to $0\to I\to R\to R/I\to 0$, we obtain $$0\to \Tor^R_1(M,N)\to I\otimes I \stackrel{f}\lo I\to I/I^2\to 0,$$where $f(x\otimes y)=xy$. In particular, $\Tor^R_1(M,N)	=\ker(f)$ and $0\neq x_1\otimes x_2-x_2\otimes x_1\in\ker(f)$. From this, $\Tor^R_+(M,N)\neq0$.
In the case $d=2$,   we see $N$ is free, and so claim is trivial.
\end{remark}


\section{Positive side of Conjecture \ref{1}}

Conjecture \ref{1} is true over $1$-dimensional rings  by a trivial reason:

\begin{observation}\label{one}
	Let $R$ be a $1$-dimensional local  ring, $M$
	be  perfect and of positive dimension, and let $N$ be  $1$-dimensional. If $M\otimes_R N$ is 
	Cohen-Macaulay, then  $N$ is maximal Cohen-Macaulay.
\end{observation}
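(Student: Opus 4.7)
The plan is to show that the hypotheses force $M$ to be free, after which the conclusion is immediate.

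First I would note that since $\dim R = 1$ and $\dim M > 0$, we must have $\dim M = 1 = \dim R$. The key step is then to compute $\grade_R(M)$. Pick a minimal prime $\fp$ of $\Ann M$ with $\dim R/\fp = \dim M = \dim R$; this $\fp$ is necessarily a minimal prime of $R$ itself. Since minimal primes lie in $\Ass(R)$, the ideal $\Ann M \subseteq \fp$ consists entirely of zerodivisors, hence $\grade_R(M) = \grade_R(\Ann M, R) = 0$. (Equivalently: if $\grade_R(M) \geq 1$, there would be a non-zerodivisor $x \in \Ann M$, and then $\dim M \leq \dim R/xR = 0$, contradicting $\dim M = 1$.)

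Now perfection gives $\pd_R(M) = \grade_R(M) = 0$, so $M$ is projective, and being finitely generated over a local ring, $M$ is free of some rank $r \geq 1$ (with $r > 0$ because $\dim M > 0$). Then
\[
M \otimes_R N \;\cong\; N^{\oplus r},
\]
and the Cohen--Macaulay hypothesis on $M \otimes_R N$ transfers to $N$ (for instance via $\depth_R(N^{\oplus r}) = \depth_R(N)$). Since $\dim N = 1 = \dim R$, this gives that $N$ is maximal Cohen--Macaulay.

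There is no real obstacle here; the only point that requires a moment's care is the claim $\grade_R(M) = 0$ in the absence of a Cohen--Macaulay assumption on $R$, which is why I would give the short minimal-prime / zerodivisor argument explicitly rather than appeal to a codimension formula that needs equidimensionality.
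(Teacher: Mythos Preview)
Your proof is correct and follows essentially the same line as the paper: both arguments establish $\pd_R(M)=0$ so that $M$ is free, and then read off the Cohen--Macaulay property of $N$ from $M\otimes_R N\cong N^{\oplus r}$. The only difference is that the paper obtains $\pd_R(M)=0$ via Auslander--Buchsbaum (asserting $\depth(M)=\dim(M)>0$), whereas you compute $\grade_R(M)=0$ directly from the annihilator lying in a minimal prime---your route is in fact the more careful one, since it does not tacitly rely on $R$ being Cohen--Macaulay.
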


\begin{proof}
	We know that $\depth(M)=\dim(M)>0$. By Auslander-Buchsbaum
	formula, $\pd(M)=0$. Recall that  $M\otimes_R N=\oplus N$ is 
	Cohen-Macaulay. From this $N$ is Cohen-Macaulay, and so the claim follows.
\end{proof}

The module $M$ in Example \ref{e}(iii) is of projective dimension at least two.
What can say if $\pd_R (M)=1$? First, we recall the following two results:

\begin{fact}\label{aus}(See \cite[Theorem 1.2]{au})
 Let $R$ be any local ring and $\pd_R (M)<\infty$. Let $q$ be the largest number such that  $\Tor_q^R(M, N)\neq0$. If $\depth_R (\Tor_q^R(M, N))\leq1$ or $q=0$,
	then $$\depth_R(N)=\depth_R (\Tor_q^R(M, N))+\pd_R(M)-q.$$
\end{fact}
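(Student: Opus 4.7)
The plan is to induct on $p := \pd_R(M)$. The base case $p = 0$ is immediate: $M$ is free, so $q = 0$ and $M \otimes_R N$ is a finite direct sum of copies of $N$, giving $\depth(M \otimes_R N) = \depth(N)$, which is exactly the asserted formula.

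For $p \geq 1$, I would reduce to projective dimension $p - 1$ via a short exact sequence $0 \to K \to F \to M \to 0$ with $F$ free of finite rank and $\pd(K) = p - 1$. The long exact sequence of $\Tor^R_*(-, N)$ yields
\[
\Tor_i^R(K, N) \cong \Tor_{i+1}^R(M, N) \quad \text{for } i \geq 1,
\]
together with
\[
0 \to \Tor_1^R(M, N) \to K \otimes_R N \to F \otimes_R N \to M \otimes_R N \to 0.
\]
If $q \geq 2$, then the largest nonvanishing $\Tor_i^R(K, N)$ sits in degree $q - 1 \geq 1$ and is isomorphic to $\Tor_q^R(M, N)$, so the depth hypothesis transfers verbatim; the inductive hypothesis applied to $K$ with $q' = q - 1$ delivers the formula at once.

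In the remaining range $q \leq 1$ every higher $\Tor$ of $K$ vanishes, so the inductive hypothesis (applied to $K$ with $q' = 0$) yields $\depth(N) = \depth(K \otimes_R N) + (p - 1)$. It then suffices to relate $\depth(K \otimes_R N)$ to $\depth(M \otimes_R N)$ and, when $q = 1$, also to $\depth(\Tor_1^R(M, N))$. For this I would split the four-term sequence above through its image $J$ into
\[
0 \to \Tor_1^R(M, N) \to K \otimes_R N \to J \to 0 \quad \text{and} \quad 0 \to J \to F \otimes_R N \to M \otimes_R N \to 0,
\]
and run the depth lemma on both pieces, using $\depth(F \otimes_R N) = \depth(N)$.

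The main obstacle is that the depth lemma ordinarily provides only inequalities. The hypothesis $\depth(\Tor_q^R(M, N)) \leq 1$ (or $q = 0$) is exactly what forces the connecting homomorphisms in the local cohomology long exact sequences, in the crucial low degrees, to be injective or surjective so that these inequalities become equalities. Once the sharp relation $\depth(K \otimes_R N) = \depth(M \otimes_R N) + 1$ (corrected by $\depth(\Tor_1^R(M, N))$ when $q = 1$) is in hand, substituting back into the inductive identity closes the argument.
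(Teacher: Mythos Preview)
The paper does not prove this statement: it is recorded as a \emph{Fact} with a bare citation to Auslander's original paper \cite[Theorem~1.2]{au}, so there is no in-paper argument to compare against. Your inductive scheme on $\pd_R(M)$ via a first syzygy $0\to K\to F\to M\to 0$ is exactly Auslander's approach, and your reduction for $q\ge 2$ is clean and correct.

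Where your sketch becomes genuinely incomplete is the clause ``$q=0$''. You write that the hypothesis $\depth(\Tor_q)\le 1$ (or $q=0$) forces the connecting maps in local cohomology to be injective/surjective, but for $q=0$ there is no depth hypothesis at all, and the depth lemma by itself does not close the gap at the first nontrivial step $p=1$. Concretely, from $0\to N^{s}\to N^{r}\to M\otimes_R N\to 0$ the depth lemma only gives $\depth(M\otimes_R N)\ge \depth(N)-1$; nothing you have written excludes $\depth(M\otimes_R N)\ge \depth(N)$. What actually rules this out is taking the resolution \emph{minimal}, so the matrix $R^{s}\to R^{r}$ has entries in $\fm$; then the induced map $\HH^{d}_{\fm}(N)^{s}\to \HH^{d}_{\fm}(N)^{r}$ (with $d=\depth N$) kills the nonzero socle of the artinian module $\HH^{d}_{\fm}(N)$ and hence fails to be injective, forcing $\HH^{d-1}_{\fm}(M\otimes_R N)\neq 0$. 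Equivalently, one uses the Nakayama-type fact that $\depth(N)=0$ and $\pd(M)\ge 1$ force $\Tor_{\pd(M)}(M,N)\neq 0$, which anchors a secondary induction on $\depth(N)$. Either ingredient is essential and is missing from your outline; once inserted, the $q\le 1$ analysis you describe (splitting the four-term sequence through its image and chasing local cohomology) does go through.
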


\begin{fact}\label{YD}
	Let $R$ be local,  $M$  of finite projective dimension  and $N$ be such that $\dim (N)=\dim (R)$. The following holds: 	
	\begin{enumerate}
		\item[i)] $\depth(M)\leq\dim(M\otimes_R N)\leq \dim (M)$,
		\item[ii)] if $M$ is perfect, then
	$\dim(M\otimes_RN)=\dim (M)$.
	\end{enumerate}
\end{fact}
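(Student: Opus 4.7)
The plan is to bracket $\dim(M\otimes_R N)$ between two bounds using three standard tools: the identity $\Supp(M\otimes_R N)=\Supp(M)\cap\Supp(N)$ for the upper bound; the New Intersection Theorem of Peskine--Szpiro, Hochster, and P.~Roberts for the lower bound; and the Auslander--Buchsbaum formula to interpret $\pd_R(M)$.

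For the upper bound in (i), $\Supp(M\otimes_R N)=\Supp(M)\cap\Supp(N)\subseteq\Supp(M)$ gives $\dim(M\otimes_R N)\leq\dim(M)$ at once. For the lower bound, the New Intersection Theorem supplies
\[
\dim(N)\;\leq\;\pd_R(M)+\dim(M\otimes_R N)
\]
whenever $\pd_R(M)<\infty$. Substituting $\dim(N)=\dim(R)$ together with the Auslander--Buchsbaum equality $\pd_R(M)=\depth(R)-\depth_R(M)$, I obtain
\[
\dim(M\otimes_R N)\;\geq\;\dim(R)-\depth(R)+\depth_R(M)\;\geq\;\depth_R(M),
\]
the final inequality being simply $\depth(R)\leq\dim(R)$.

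For (ii), perfectness of $M$ upgrades $\pd_R(M)$ to $\grade_R(M)$, so the same Intersection Theorem yields $\dim(M\otimes_R N)\geq\dim(R)-\grade_R(M)$. Since this Fact is invoked in the Cohen--Macaulay setting surrounding Conjecture~\ref{1}, Fact~\ref{22} tells us $M$ itself is Cohen--Macaulay, hence $\dim(M)=\depth_R(M)=\depth(R)-\grade_R(M)=\dim(R)-\grade_R(M)$, which matches the Intersection Theorem lower bound. Combining with (i) forces $\dim(M\otimes_R N)=\dim(M)$.

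The main obstacle is the identification $\dim(M)=\dim(R)-\grade_R(M)$ used in (ii): it is not automatic over an arbitrary local ring because $M$ could be supported on a non-top-dimensional component of $R$, but it is forced as soon as $R$ is Cohen--Macaulay, since then perfectness of $M$ forces $M$ to be Cohen--Macaulay and $R$ is equidimensional and catenary. Everything else in the argument reduces to routine bookkeeping with the two input theorems.
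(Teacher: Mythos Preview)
Your argument for part~(i) is correct and coincides with the paper's proof essentially line for line: the upper bound from $\Supp(M\otimes_R N)\subseteq\Supp(M)$, the lower bound from the intersection theorem, and the conversion via Auslander--Buchsbaum.

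For part~(ii) the comparison is different. The paper does not prove (ii) at all; it simply cites \cite[Lemma~2.1]{y}. Your proposed argument is valid when $R$ is Cohen--Macaulay, and you are candid that the step $\dim(M)=\dim(R)-\grade_R(M)$ is where the general case breaks down. That honesty is warranted: as stated, Fact~\ref{YD} assumes only that $R$ is local, and over an arbitrary local ring a perfect module need not satisfy $\dim(M)+\grade(M)=\dim(R)$. The paper itself later (Lemma~\ref{l}) imposes \emph{formal equidimensionality} and cites Roberts~\cite{ro} precisely to secure this identity. So your argument for (ii) is not a proof of the statement in its full generality; it is, however, adequate for every place the paper actually invokes (ii), since those invocations (Proposition~\ref{pone}, Corollary~\ref{ap}) all sit inside a Cohen--Macaulay hypothesis. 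If you want a self-contained proof at the stated level of generality, you would need to supply the equidimensional-type input that Yoshida's lemma presumably packages.
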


\begin{proof}
i): The second inequality always holds, because  $\Supp(M\otimes _R N)=\Supp(M)\cap\Supp(N)$. 
	By intersection theorem \cite{int}, $$\dim(M\otimes_R N)\geq\dim (N)-\pd(M)=\dim(R)-\pd(M)\geq\depth(R)-\pd(M).$$
	By Auslander-Buchsbaum, $\depth(M)\leq\dim(M\otimes_R N)$.
	
	ii): This is in \cite[Lemma 2.1]{y}.
\end{proof}

\begin{example}\label{ndformula}
Concerning Fact \ref{YD}, the first items shows that the assumption $\pd(M)<\infty$ is really needed   even if we assume $M$ is quasi-perfect. The second item shows that the  dimension restriction on $N$ is important:
\begin{enumerate}
		\item[i)] Let $R:=\frac{k[[x,y]] }{(xy)}$, $M:=R/(x)$ and $N:=R/(y)$. Then $\dim(M)=\dim(N)=\dim(R)=1$, 
	$M$ is maximal Cohen-Macaulay and so   totally reflexive. But, 
$\dim(M\otimes_RN)=0<\dim (M)$.
\item[ii)] Let $(R,\fm)$ be a $d$-dimensional regular local ring with $d>1$.
There are $d$ elements $\{x_1,\ldots,x_d\}$ that generates $\fm$.  Let $s\in[1,d-1]$ be an integer, define $\fp:=(x_1,\ldots,x_s)$ and 
$\fq:=(x_{s+1},\ldots,x_d)$. Clearly,
$M:=R/\fp$ is perfect and let $N:=R/ \fq$. Then $\dim(M\otimes_RN)=0<\depth(M)=\dim (M)=s$.  
\end{enumerate}
\end{example}

In the case of integral domains, one can   say a little more:

\begin{remark}
	Let $R$ be a local domain, $M,N$ be finitely generated  such that $\dim (N)=\dim (R)$. Then $\Supp(M\otimes _R N)=\Supp(M)$. In particular, $\dim(M\otimes_RN)=\dim M$.
\end{remark}

\begin{proof}
Since  both modules are finitely generated, $\Supp(M\otimes _R N)=\Supp(M)\cap\Supp(N)$. Since $R$ is domain, $\Supp(N)=\Spec(R)$.
	From this,  $\Supp(M\otimes _R N)=\Supp(M)$.
\end{proof}

 Here, we present a partial positive answer to Conjecture \ref{1}:

\begin{proposition}\label{pone}
Let $R$ be a Cohen-Macaulay local  ring with isolated singularity of dimension $d$, $M$
 be perfect of projective dimension at most one, and let $N$ be  torsion-free. If $M\otimes_R N$ is 
 	Cohen-Macaulay, then  $N$ is maximal Cohen-Macaulay.
\end{proposition}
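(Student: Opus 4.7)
The plan is to reduce the statement to the vanishing of $\Tor_1^R(M,N)$ and then invoke Fact \ref{aus} with $q=0$. As a preliminary, since $R$ is Cohen-Macaulay (hence equidimensional) and $N$ is torsion-free, $\Ass(N)\subseteq\Min(R)$, so $\dim(N)=d$. I would split into two cases according to $\pd(M)$.

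If $\pd(M)=0$, then $M$ is free and $M\otimes_R N$ is a direct sum of copies of $N$, so the Cohen-Macaulay hypothesis on the tensor product transfers directly to $N$, which has dimension $d$. Assume therefore $\pd(M)=1$, and fix a free resolution $0\to R^a\to R^b\to M\to 0$. Tensoring with $N$ produces the exact sequence
\[ 0\to\Tor_1^R(M,N)\to N^a\to N^b\to M\otimes_R N\to 0.\]
Since $M$ is perfect with $\pd(M)=1$, one has $\grade(M)=1$, so $\Ann(M)$ contains a non-zerodivisor $x$. Functoriality yields $x\cdot\Tor_1^R(M,N)=0$, while torsion-freeness of $N$ makes $x$ a non-zerodivisor on $N^a$. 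As $\Tor_1^R(M,N)$ embeds in $N^a$, we conclude $\Tor_1^R(M,N)=0$.

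By Fact \ref{YD}(ii), $\dim(M\otimes_R N)=\dim(M)=d-1$, and the Cohen-Macaulay hypothesis then forces $\depth(M\otimes_R N)=d-1$. Applying Fact \ref{aus} with $q=0$,
\[ \depth(N)=\depth(M\otimes_R N)+\pd(M)=(d-1)+1=d,\]
so $\depth(N)=\dim(N)=d$ and $N$ is maximal Cohen-Macaulay.

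The main obstacle is the vanishing of $\Tor_1^R(M,N)$, which is exactly where both hypotheses must be used simultaneously: perfection of $M$ supplies a non-zerodivisor inside $\Ann(M)$, and torsion-freeness of $N$ keeps that element regular on $N^a$. I do not appear to need the isolated singularity assumption for this $\pd(M)\le 1$ case; presumably it is reserved for the parallel $\pd(M)=2$ analysis mentioned in the surrounding discussion.
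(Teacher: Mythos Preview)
Your argument is correct and takes a genuinely different route from the paper. The paper proceeds by induction on $d$: for $\fp\neq\fm$ the isolated singularity hypothesis makes $R_\fp$ regular, so the inductive step forces $N_\fp$ to be free and hence $\Tor_1^R(M,N)_\fp=0$; thus $\Tor_1^R(M,N)$ has finite length, and since it embeds in $N^a$ with $\depth(N)>0$ it must vanish. Your approach bypasses all of this with the annihilator trick: $\grade(M)=1$ supplies a non-zerodivisor $x\in\Ann(M)\subseteq\Ann\Tor_1^R(M,N)$, and torsion-freeness of $N$ makes $x$ regular on $N^a$, so the embedded $\Tor_1$ is killed directly. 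This is strictly more elementary and, as you observed, shows that the isolated singularity hypothesis is superfluous when $\pd(M)\le 1$; the paper reserves that hypothesis for the same purpose in the $\pd(M)=2$ remark that follows, where your annihilator argument no longer suffices to kill $\Tor_2$.
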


\begin{proof}
	Since $N$ is torsion-free, it is of dimension $d$. We may assume that $d>0$. The proof is by induction on $d$. If $d=1$
	we get the claim because $\depth_R(N)>0$. By applying the inductive step, it turns out that 
	$\ell(\Tor^R_1(M,N)) <\infty$, because $R$ is regular
	over the punctured spectrum and that maximal  Cohen-Macaulay modules over regular rings are free.
	Let $0\to R^n\to R^m\to M\to 0$ be a free resolution of $M$. Apply   $-\otimes_RN$ to it we have
	 the following exact sequence $$0\lo \Tor^R_1(M,N)\lo N\otimes _R R^n\lo N\otimes_R  R^m \lo M\otimes_RN\lo 0.$$
	  Since $N$ is of positive depth we deduce that  	$\Tor^R_1(M,N)=0$.
	   Thus, $\Tor^R_+(M,N)=0$. This and finiteness of $\pd_R (M)$ allow us to
	apply the depth formula of Auslander:
	$$\depth_R (N)=\depth_R (M\otimes_R  N)+\pd_R (M)\quad(\ast)$$
 Also, in the light of Auslander-Buchsbaum formula, $$\depth_R (M)+\pd_R (M)=\depth(R)\quad(+)$$
Then, we have
	
	 \[\begin{array}{ll}
\depth_R (N)&\stackrel{(\ast)}=\depth_R (M\otimes_R  N)+\pd_R (M) \\
&=\dim (M\otimes_R  N)+\pd_R (M)\\
	&\stackrel{\ref{YD}}=\dim (M)+\pd_R (M) \\
	&\stackrel{\ref{22}}=\depth_R (M)+\pd_R (M) \\
	&\stackrel{(+)}=\depth(R)\\
	&=\dim (R).
	\end{array}\]
	 By definition, $N$ is maximal Cohen-Macaulay.
\end{proof}

In the same vein we have
\begin{proposition}
	Let $R$ be a Cohen-Macaulay local ring of dimension $d$, $M$ be perfect of projective dimension at most one, 
	and let $N$ be   torsion-free and locally free over the punctured spectrum. If $M\otimes_R N$ is 
		Cohen-Macaulay, then  $N$ is maximal Cohen-Macaulay.
\end{proposition}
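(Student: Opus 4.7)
The plan is to adapt the argument of Proposition \ref{pone} almost verbatim, the only substantive change being how we establish that $\Tor^R_1(M,N)$ has finite length. In Proposition \ref{pone} this came from an induction on $d$ plus the fact that over a local ring with isolated singularity, the localizations $R_\fp$ for $\fp\neq\fm$ are regular so that MCM modules there are free. Here, the hypothesis that $N$ is locally free on the punctured spectrum supplies exactly this conclusion directly, so no induction on dimension is needed.

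First I would dispose of the trivial case $d=0$ and note that, since $N$ is torsion-free over the equidimensional Cohen-Macaulay ring $R$, one has $\Ass(N)\subseteq\Min(R)$, hence $\dim N=d$ and $\depth_R(N)\geq 1$. Then, for any non-maximal prime $\fp$, the local freeness hypothesis gives $N_\fp$ free over $R_\fp$, so $\Tor^R_1(M,N)_\fp=\Tor^{R_\fp}_1(M_\fp,N_\fp)=0$. Thus $\Supp(\Tor^R_1(M,N))\subseteq\{\fm\}$, i.e., $\Tor^R_1(M,N)$ has finite length.

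Using a free resolution $0\to R^n\to R^m\to M\to 0$, available since $\pd_R(M)\leq 1$, and tensoring with $N$, I realize $\Tor^R_1(M,N)$ as a submodule of $N^n$. Since $\depth_R(N^n)\geq 1$, a finite length submodule must vanish, forcing $\Tor^R_1(M,N)=0$, and hence $\Tor^R_+(M,N)=0$. Auslander's depth formula (Fact \ref{aus}) with $q=0$ then yields
\[
\depth_R(N)=\depth_R(M\otimes_R N)+\pd_R(M).
\]
Since $M\otimes_R N$ is Cohen-Macaulay and $M$ is perfect, Fact \ref{YD}(ii) gives $\depth_R(M\otimes_R N)=\dim(M\otimes_R N)=\dim M$, and Fact \ref{22} together with Auslander-Buchsbaum give $\dim M+\pd_R(M)=\depth_R(M)+\pd_R(M)=d$. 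Combining, $\depth_R(N)=d=\dim N$, so $N$ is maximal Cohen-Macaulay.

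The only potential obstacle is the finite-length/depth comparison used to kill $\Tor^R_1$, and this is completely clean here because the locally-free hypothesis gives the support bound on $\Tor^R_1(M,N)$ for free, without any recursive dimension argument. The remaining steps are formal and essentially identical to the computation carried out at the end of the proof of Proposition \ref{pone}.
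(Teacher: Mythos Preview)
Your proposal is correct and follows exactly the approach the paper intends: the phrase ``In the same vein'' signals that the argument of Proposition~\ref{pone} carries over verbatim, with the locally-free-on-the-punctured-spectrum hypothesis replacing the inductive step plus isolated singularity to force $\ell(\Tor^R_1(M,N))<\infty$. The remaining steps---embedding $\Tor^R_1(M,N)$ in $N^n$ to kill it, then applying Auslander's depth formula and Fact~\ref{YD}---are identical to those in Proposition~\ref{pone}.
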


Note that Conjecture \ref{1} is not true when $\pd(M)=2$. Despite of this,
we have:
\begin{remark}
	Let $R$ be a Cohen-Macaulay local  with isolated singularity of dimension $d$. Let $M$ be
	a cyclic perfect module of projective dimension at most two, and let $N$ be   torsion-free.
	 If $M\otimes_R N$  is	Cohen-Macaulay and   $(\Ann_RM)\otimes_R N $ is torsion-free, then  $N$ is maximal Cohen-Macaulay.
\end{remark}

\begin{proof}
	We may assume that $d>0$. The proof is by induction on $d$. If $d=1$
	we get the claim because $\depth_R(N)>0$. By applying the inductive step, it turns out that 
	$\ell(\Tor^R_+(M,N)) <\infty$. Let $I:=\Ann_R(M)$. In the light of Hilbert-Burch, the free resolution of $I$ is of the following from
	 $0\to R^n\to R^{n+1}\to I\to 0.$    Apply   $-\otimes_RN$ to it we have the following
	  exact sequence $$0\lo \Tor^R_1(I,N)\lo N\otimes _R R^n\lo N\otimes_R  R^{n+1} \lo M\otimes_RN\lo 0.$$ 
	  Since $N$ is of positive depth we deduce that  	$\Tor^R_1(I,N)=0$. From this, $\Tor^R_2(M,N)=0$. 
	   We apply   $-\otimes_RN$ to  $0\to I\to R\to R/I\to 0$  we have the following exact sequence
	    $$0\lo \Tor^R_1(R/I,N)\lo I\otimes _RN \lo N \lo M\otimes_RN\lo 0.$$
	By our assumption, $\depth( I\otimes_R N )>0$. From this, $\Tor^R_1(R/I,N)=0$.
	 Thus, $\Tor^R_+(M,N)=0$.  Similar to Proposition \ref{pone} we have
	
	\[\begin{array}{ll}
	\depth_R (N)&=\depth_R (M\otimes_R  N)+\pd_R (M) \\
	&=\depth_R (M)+\pd_R (M) \\
	&=\depth(R).
	\end{array}\]
Since $\depth(R)=\dim R$, $N$ is maximal Cohen-Macaulay.
\end{proof}

Keep Remark \ref{tor} in mind and recall that $\pd(M)=\dim R-1$. This remark motivate us
to restate Conjecture \ref{1} in the following obstruction format:
\begin{remark}
Let $R$ be a Cohen-Macaulay  local ring, $M$ be a perfect $R$-module of positive dimension and let  $N$ be  a 
module with $\dim (N) = \dim (R)=d$. If $M \otimes_R N$ is  a Cohen-Macaulay $R$-module. Suppose
in addition $\pd(M)=d-1$. Then Conjecture \ref{1} is true if and only if $\Tor^R_+(M,N)=0$.	
	\end{remark}

\begin{proof}
	Suppose $\Tor^R_+(M,N)\neq0$.	Then
	$q:=\sup\{ i:\Tor_i^R(M, N)\neq0\}\geq1$. By Auslander-Buchsbaum formula, $\depth(M)=1$.
	Recall that  $\depth_R (\Tor_q^R(M, N))\leq\dim (\Tor_q^R(M, N))\leq\dim(M)=1$. In the light of Fact \ref{aus} we see
	 $$\depth_R(N)=\depth_R (\Tor_q^R(M, N))+\pd_R(M)-q\leq 1+(d-1)-1\leq d-1,$$ i.e.,
	 $N$ is not maximal Cohen-Macaulay. The reverse part is in the proof of Proposition \ref{pone}.
\end{proof}

\section{Perfect and Cohen-Macaulay defects}

The following  drops finiteness of $N$ from \cite[Lemma 2.2]{y}. 

\begin{lemma} \label{big}
	Suppose that $\pd_R(M) < \infty$ and $N$ is   big Cohen-Macaulay.  Then $\Tor^R_+(M,N) = 0$. \end{lemma}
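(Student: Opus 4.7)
The plan is to exploit the defining property of a big Cohen-Macaulay module---namely, that some system of parameters of $R$ forms an $N$-regular sequence, so that $\depth_R(N) = d := \dim R$ (with depth understood via $\Ext^*_R(R/\fm,-)$, equivalently via regular sequences)---in order to reduce the required $\Tor$-vanishing to a depth-theoretic acyclicity criterion, mirroring the finitely generated case handled in \cite[Lemma 2.2]{y}.

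Concretely, I would let $F_\bullet \colon 0 \to F_p \to \cdots \to F_0 \to 0$ be a finite minimal free resolution of $M$ of length $p := \pd_R(M) < \infty$, so that $\Tor^R_i(M,N) = H_i(F_\bullet \otimes_R N)$ and the lemma reduces to the statement that this homology vanishes for $i \geq 1$. Each $F_i \otimes_R N$ is a finite direct sum of copies of $N$ and therefore inherits depth $d$. On the other hand, the Auslander-Buchsbaum formula applied to $M$ gives $p = \depth R - \depth M \leq \depth R \leq d$, so $\depth_R(F_i \otimes_R N) = d \geq i$ for every $1 \leq i \leq p$. This is exactly the depth condition needed to apply a Peskine-Szpiro style acyclicity lemma to the complex $F_\bullet \otimes_R N$, which then delivers vanishing of $H_i(F_\bullet \otimes_R N)$ for $i \geq 1$.

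The principal obstacle is that the Peskine-Szpiro acyclicity lemma is classically stated for complexes of finitely generated modules, whereas $F_i \otimes_R N$ is infinitely generated; one therefore needs a big Cohen-Macaulay avatar of the lemma (depth interpreted via $\Ext^*_R(R/\fm,-)$), as can be extracted from standard references such as Bruns-Herzog or Roberts. An alternative route is to induct on $p$: a short exact sequence $0 \to K \to F \to M \to 0$ with $\pd_R(K) = p-1$ and the long exact sequence of $\Tor$ at once reduce the problem to showing $\Tor^R_1(M,N) = 0$, and the latter can be attacked by combining $\depth_R(N) = d$ and the Auslander-Buchsbaum bound $p \leq d$ with Fact \ref{aus}, taking care to validate the depth hypothesis on $\Tor^R_1(M,N)$ appearing in that fact. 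Both routes ultimately rest on the same depth-theoretic input, namely that a big Cohen-Macaulay $N$ behaves, as far as depth is concerned, exactly like a finitely generated maximal Cohen-Macaulay module.
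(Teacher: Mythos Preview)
Your overall shape---tensor a finite free resolution $F_\bullet$ of $M$ with $N$ and invoke an acyclicity criterion---matches the paper's, but there is a genuine gap, and you have misdiagnosed where it lies.

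The Peskine--Szpiro acyclicity lemma (e.g.\ \cite[1.4.13]{BH}) does \emph{not} require the modules in the complex to be finitely generated; that is not the obstacle. What it \emph{does} require, in addition to $\depth(F_i\otimes_R N)\ge i$, is that each nonzero homology module $H_i(F_\bullet\otimes_R N)=\Tor^R_i(M,N)$ have depth~$0$. You never verify this, and for a non-finitely-generated $N$ there is no obvious reason it should hold; knowing only that a single system of parameters is $N$-regular gives no control over $\depth_{\fm}\Tor^R_i(M,N)$. Your complex $0\to N^{\beta_p}\to\cdots\to N^{\beta_0}$ certainly satisfies the depth condition on the terms, but without the homology hypothesis the lemma is simply false (consider $0\to R\xrightarrow{\,0\,}R$ over a DVR). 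The alternative route via Fact~\ref{aus} has the same defect: Auslander's formula is proved for finitely generated $N$, and its hypothesis $\depth\Tor^R_q(M,N)\le 1$ is again unverified here.

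The paper circumvents all of this by first passing to $\widehat R$ and replacing $N$ by a \emph{balanced} big Cohen--Macaulay module, so that every system of parameters---and hence every partial parameter sequence contained in a given ideal---is $N$-regular. Then the relevant criterion is not Peskine--Szpiro but the Buchsbaum--Eisenbud--Northcott exactness criterion: $F_\bullet\otimes_R N$ is acyclic iff $\grade(I_{r_i}(\phi_i),N)\ge i$ for each $i$. Since $F_\bullet$ resolves $M$, one has $\Ht(I_{r_i}(\phi_i))\ge\grade_R(I_{r_i}(\phi_i))\ge i$, and for a balanced big Cohen--Macaulay $N$ every ideal $I$ satisfies $\grade(I,N)\ge\Ht(I)$, which finishes the argument. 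The reduction to the balanced case is the step your proposal is missing; without it, the mere equality $\depth_{\fm}N=d$ does not give $\grade(I_i,N)\ge i$ for the Fitting ideals, and neither acyclicity criterion applies.
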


\begin{proof}
	Without loss of the generality we may assume that $R$ is complete. This enable us to assume 
	that $M$ is balanced    big Cohen-Macaulay. Now, the proof is similar to \cite[Lemma 2.2]{y}, only note that instead of the Eisenbud-Buchsbaum criterion
	we use a criterion of Northcott. 
\end{proof}
The third item of the following result reproves \cite[Proposition 2.4]{y},  where Yoshida used the intersection theorem and  a formula   involved  in Betti and Bass numbers.

\begin{proposition} \label{re}
	Let $R$ be local,  $M$  of finite projective dimension  and $N$ be maximal  Cohen-Macaulay. 
The following holds: 	
\begin{enumerate}
	\item[i)] $\depth(M)\leq\depth(M\otimes_R N)\leq \dim (M)$,
	\item[ii)] if $R$ is  Cohen-Macaulay, then $\depth(M)=\depth(M\otimes_R N)$,	
	 \item[iii)] if $M$ is perfect, then
	 $M\otimes_R N$  is	Cohen-Macaulay.
\end{enumerate}			
\end{proposition}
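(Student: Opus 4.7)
The overall plan is to extract a Tor-vanishing statement from Lemma \ref{big}, apply Auslander's depth formula (Fact \ref{aus}), and then read off the three items as direct consequences. Since every maximal Cohen-Macaulay module is in particular a big Cohen-Macaulay module, Lemma \ref{big} yields $\Tor^R_+(M,N)=0$. Thus the largest index $q$ with $\Tor_q^R(M,N)\neq 0$ is $q=0$, and because $\depth_R(N)=\dim R$, the depth formula collapses to the key equality
\[
\depth_R(M\otimes_R N) \;=\; \depth_R(N) - \pd_R(M) \;=\; \dim R - \pd_R(M).
\]

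For i), the upper bound follows from $\Supp(M\otimes_R N)\subseteq\Supp(M)$ combined with $\depth\leq\dim$, while the lower bound uses Auslander--Buchsbaum: $\depth_R(M) = \depth(R) - \pd_R(M) \leq \dim R - \pd_R(M) = \depth_R(M\otimes_R N)$. For ii), the Cohen-Macaulay hypothesis on $R$ upgrades $\depth(R) \leq \dim R$ to an equality, and the lower bound in i) becomes $\depth_R(M) = \depth_R(M\otimes_R N)$.

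For iii), Fact \ref{YD}(ii) applies (since $\dim N=\dim R$) and gives $\dim(M\otimes_R N) = \dim M$. The intersection theorem gives $\dim M+\pd_R(M)\geq\dim R$ for any nonzero module of finite projective dimension, and for a perfect $M$ the reverse inequality follows from $\pd_R(M)=\grade_R(M)\leq\Ht(\Ann_R M)\leq\dim R-\dim M$. Hence $\dim M=\dim R-\pd_R(M)$, and combining with the key equality above we obtain $\depth_R(M\otimes_R N) = \dim M = \dim(M\otimes_R N)$, i.e.\ $M\otimes_R N$ is Cohen-Macaulay. (Under the additional assumption that $R$ is Cohen-Macaulay, iii) follows even more cleanly from ii) together with Fact \ref{22}, bypassing the intersection-theoretic step.)

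The main obstacle is conceptual rather than computational: spotting that the maximal Cohen-Macaulay hypothesis on $N$ is exactly what is needed to invoke Lemma \ref{big}, thereby unlocking Fact \ref{aus} with $q=0$. Once this is done, all three items reduce to a routine combination of Auslander--Buchsbaum, the intersection theorem, and the dimension identity in Fact \ref{YD}.
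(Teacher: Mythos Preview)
Your proposal is correct and follows essentially the same route as the paper: invoke Lemma~\ref{big} to kill higher Tor, apply Fact~\ref{aus} with $q=0$ to obtain $\depth_R(M\otimes_R N)=\dim R-\pd_R(M)$, and then combine with Auslander--Buchsbaum for i) and ii). For iii) the paper simply quotes the identity $\dim R=\dim M+\grade(M)$ for perfect $M$ and uses only the easy inequality $\dim(M\otimes_R N)\leq\dim M$, whereas you derive that identity via the intersection theorem and also appeal to Fact~\ref{YD}(ii); the extra ingredients are harmless but not strictly needed.
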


\begin{proof}
By Lemma \ref{big}, $\Tor^R_+(M,N)=0$. This allow us to apply Auslander's depth formula.
In view of Fact \ref{aus}  we observe that	
	$$\depth_R (M\otimes_R  N)=\depth_R (N)-\pd_R (M)=\dim (R)-\pd(M)\quad(\ast)$$
	 i): By Auslander-Buchsbaum formula we have:
	 	$$\depth_R (M\otimes_R  N)\stackrel{(\ast)}=\dim( R)-\pd(M)\geq \depth(R)-\pd(M)=\depth(M).$$
	To see the second inequality, we note that 
	$$\depth(M\otimes_R N)\leq \dim(M\otimes_R N)\leq\dim (M)\quad(+).$$

ii):	Since $R$ is Cohen-Macaulay, we have $\dim( R)=\depth(R)$. Put this
in $(\ast)$ and apply Auslander-Buchsbaum formula to see $$\depth_R (M\otimes_R  N)=\depth(R)-\pd(M)=\depth(M).$$
	
	iii):
Recall  that $\dim(R)=\dim(M)+\grade(M)$, because $M$ is perfect. 
Then
	\[\begin{array}{ll}
\dim (M)&\stackrel{(+)}\geq\depth_R (M\otimes_R  N) \\
&\stackrel{(\ast)}=\dim(R)-\pd(M)\\
&=\dim(R)-\grade(M)\\
&=\dim(M).
\end{array}\]Therefore, $\dim (M) =\depth_R (M\otimes_R  N)$. Since $\dim (M\otimes_R N)\leq \dim (M)$,  it follows that $M\otimes_R  N$ is  Cohen-Macaulay.
\end{proof}

\begin{remark}
	Proposition \ref{re}(i) is not true even if we assume  $\Gdim (M)<\infty$, see
	Example \ref{ndformula}. 
\end{remark}

Here, we present a series of corollaries and applications (also, see the next section):

\begin{corollary} \label{cre}
	Let $(R,\fm)$ be Cohen-Macaulay,  $M$  of finite projective dimension  and $N$ be maximal  Cohen-Macaulay. 
	The following holds: 	
\begin{enumerate}
\item[i)] $\Ass(M\otimes_R N)=\Ass(M)\cap\Supp(N)$.
\item[ii)] If  $R$ is  a domain, then 	$\Ass(M\otimes_R N)=\Ass(M)$.
\end{enumerate}	
\end{corollary}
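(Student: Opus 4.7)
The plan is to reduce everything to a pointwise application of Proposition \ref{re}(ii) after localization, together with the depth characterization of associated primes.

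For part (i), I would start from the two standard facts that $\Supp(M\otimes_R N)=\Supp(M)\cap\Supp(N)$ for finitely generated modules, and that for any finitely generated $R$-module $L$, $\fp\in\Ass(L)$ iff $\fp\in\Supp(L)$ and $\depth_{R_\fp}(L_\fp)=0$. The goal is therefore to compute $\depth_{R_\fp}\big((M\otimes_R N)_\fp\big)$ for each $\fp\in\Supp(M)\cap\Supp(N)$.

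Next I would verify that the hypotheses of Proposition \ref{re}(ii) survive localization at such a $\fp$. Localizing a Cohen-Macaulay ring gives a Cohen-Macaulay ring, so $R_\fp$ is Cohen-Macaulay; $\pd_{R_\fp}(M_\fp)\leq\pd_R(M)<\infty$; and since $N$ is maximal Cohen-Macaulay over the Cohen-Macaulay ring $R$, $N$ is Cohen-Macaulay as a module and so $N_\fp$ is Cohen-Macaulay over $R_\fp$ with $\depth_{R_\fp}(N_\fp)=\Ht(\fp)=\dim R_\fp$, i.e.\ $N_\fp$ is maximal Cohen-Macaulay over $R_\fp$. Proposition \ref{re}(ii) applied at $\fp$ then yields $\depth_{R_\fp}\big((M\otimes_R N)_\fp\big)=\depth_{R_\fp}(M_\fp)$. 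Hence $\fp\in\Ass(M\otimes_R N)$ iff $\fp\in\Supp(M)\cap\Supp(N)$ and $\depth_{R_\fp}(M_\fp)=0$, which, using $\Ass(M)\subseteq\Supp(M)$, is equivalent to $\fp\in\Ass(M)\cap\Supp(N)$.

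For part (ii), I would argue that over a Cohen-Macaulay local domain $R$ a maximal Cohen-Macaulay module $N$ automatically has $\Supp(N)=\Spec(R)$. Indeed, $\dim(R/\Ann N)=\dim(N)=\dim(R)$, and in a Cohen-Macaulay domain every nonzero ideal has positive height and hence strictly smaller quotient dimension; therefore $\Ann(N)=0$, so $\Supp(N)=V(\Ann N)=\Spec(R)$. Substituting into (i) gives $\Ass(M\otimes_R N)=\Ass(M)$.

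I do not anticipate a real obstacle: the whole argument is a clean application of Proposition \ref{re}(ii) once one has the localization lemma for maximal Cohen-Macaulay modules. The only mildly delicate point is checking that $N_\fp$ remains maximal Cohen-Macaulay over $R_\fp$, which is where the Cohen-Macaulayness of $R$ is really used.
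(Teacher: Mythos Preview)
Your proposal is correct and follows essentially the same route as the paper: localize at $\fp$, verify that $N_\fp$ is maximal Cohen--Macaulay over $R_\fp$ (the paper does this by exhibiting an explicit $R$-regular sequence of length $\Ht(\fp)$ in $\fp$, which is $N$-regular since $N$ is MCM, whereas you cite the standard localization behavior of Cohen--Macaulay modules), then apply Proposition~\ref{re}(ii) and the depth characterization of associated primes. For part~(ii) the paper simply asserts $\Supp(N)=\Spec(R)$ in a domain; your annihilator argument supplies the missing justification and is correct.
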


\begin{proof}
	i): Let $\fp\in\Ass(M\otimes_R N)$. First, we claim that $N_{\fp}$ is maximal  Cohen-Macaulay.
	Indeed, $\fp\in\Supp(M\otimes_R N)\subseteq\Supp(N)$. Since $R$ is Cohen-Macaulay, there is an $R$-sequence
	$x_1,\ldots,x_h$ of length $h:=\Ht(\fp)$. Extend this to a full system of parameters of $R$ and call it $\underline{x}$.
	Recall that $N$ is maximal Cohen-Macaulay. This implies that  $\underline{x}$
	is $N$-sequence.
	From this, $x_1/1,\ldots,x_h/1$ is $N_{\fp}$-sequence. Consequently, $N_{\fp}$ is maximal Cohen-Macaulay over
	$R_{\fp}$. In view of Proposition \ref{re}(ii) we see that $$\depth(M_{\fp})=\depth(M_{\fp}\otimes_{R_{\fp}} N_{\fp})\quad(\ast)$$
	Since $\fp\in\Ass(M\otimes_R N)$, it turns out that $\depth(M\otimes_R N)_{\fp}=\depth(M_{\fp}\otimes_{R_{\fp}} N_{\fp})=0$.
	We put this in $(\ast)$ and deduce that $\depth(M_{\fp})=0$. This is equivalent to saying that $\fp\in\Ass(M)$.
	The reverse inclusion holds by the same reasoning.
	
	ii):   In the case  $R$ is  a domain, we know $\Supp(N)=\Spec(R)$. So,
the desired claim is an immediate application of i).
\end{proof}

Let us to recover:

 \begin{fact} (Huneke-Sharp)
	Let $(R,\fm)$ be Cohen-Macaulay  of prime characteristic $p$ and $F$-finite,  $M$  of finite projective dimension. 
	Then $\Ass(F^n(M))=\Ass(M)$.
\end{fact}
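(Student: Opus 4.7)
The plan is to realize the Frobenius functor $F^n$ as tensor with a maximal Cohen-Macaulay module and then invoke Corollary \ref{cre} directly. Write ${}^nR$ for the $R$-module whose underlying abelian group is $R$ but whose $R$-action is twisted by the $n$-fold Frobenius $F^n \colon R \to R$, $r \mapsto r^{p^n}$. By the standard description of the Peskine-Szpiro functor, $F^n(M) \cong M \otimes_R {}^n R$ (apply $F^n$ to a finite free resolution of $M$ and identify the result with the one obtained by tensoring that resolution with ${}^n R$).

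Next I would verify that ${}^n R$ is a maximal Cohen-Macaulay $R$-module. The hypothesis that $R$ is $F$-finite ensures that ${}^n R$ is finitely generated over $R$. Since $R$ is Cohen-Macaulay, any full system of parameters $x_1, \ldots, x_d$ is an $R$-sequence. Raising to $p^n$-th powers preserves the regular-sequence property, so $x_1^{p^n}, \ldots, x_d^{p^n}$ is still an $R$-sequence; but this is exactly the sequence describing how $x_1, \ldots, x_d$ acts on ${}^n R$. Hence $\depth_R({}^n R) \geq d$, and combined with $\dim_R({}^n R) \leq d$ this forces ${}^n R$ to be maximal Cohen-Macaulay.

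For the support computation, note that $r \in \Ann_R({}^n R)$ iff $r^{p^n} = 0$ in $R$, so $\Ann_R({}^n R)$ is contained in (and equal, for $n$ large, to) the nilradical of $R$. Therefore $\Supp({}^n R) = V(\Ann_R({}^n R)) = \Spec(R)$. The cleanest finish is now Corollary \ref{cre}(i): since $M$ has finite projective dimension and ${}^n R$ is maximal Cohen-Macaulay,
\[
\Ass(F^n(M)) = \Ass(M \otimes_R {}^n R) = \Ass(M) \cap \Supp({}^n R) = \Ass(M).
\]

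The only non-routine step is the verification that ${}^n R$ is maximal Cohen-Macaulay; everything else is formal once that identification is in place. The $F$-finiteness hypothesis is used precisely to guarantee that the tensor factor ${}^n R$ is a finitely generated $R$-module, which is required in order to apply Corollary \ref{cre}.
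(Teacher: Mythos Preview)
Your proof is correct and follows essentially the same route as the paper: identify $F^n(M)$ with $M\otimes_R{}^nR$, verify that ${}^nR$ is maximal Cohen-Macaulay via the regular sequence $x_1^{p^n},\ldots,x_d^{p^n}$ and has full support, then apply Corollary~\ref{cre}(i). You supply a bit more detail (the role of $F$-finiteness and the annihilator computation for $\Supp({}^nR)$), but the argument is the same.
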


\begin{proof}
	Each iteration of Frobenius  defines a new $R$-module structure on $R$, and this $R$-module is denoted by $ { ^n}R$.
If $x_1,\ldots,x_d$ is a full system of parameters, we know $x_1^{p^n},\ldots,x_d^{p^n}$ is a regular sequence.
From this, ${ ^n}R$ is maximal  Cohen-Macaulay and it is of full support. In view of Corollary \ref{cre}(i) $$\Ass(F^n(M))=\Ass(M\otimes { ^n}R)=\Ass(M)\cap\Supp({ ^n}R)=\Ass(M)\cap\Spec(R)=\Ass(M),$$as claimed.
\end{proof}

\begin{corollary}\label{can}
	Let $R$ be a  Cohen-Macaulay local ring  equipped with a canonical module $ \omega_R$ and let $M$ be   of finite projective dimension. 
	The following holds: \begin{enumerate}
		\item[i)]   $\Ass(M\otimes_R \omega_R)=\Ass(M)$.
		\item[ii)]  $M\otimes_R \omega_R$ has an $\omega_R$-resolution of length $\pd(M)$. 
	\end{enumerate}
 
\end{corollary}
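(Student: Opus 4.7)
The plan is to derive both parts directly from the machinery of Section 4, with $\omega_R$ playing the role of the maximal Cohen-Macaulay test module. The core observation is that the canonical module over a Cohen-Macaulay local ring is itself maximal Cohen-Macaulay and has full support.

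For (i), I would first recall that $\omega_R$ is maximal Cohen-Macaulay. To invoke Corollary \ref{cre}(i) (which asserts $\Ass(M \otimes_R N) = \Ass(M) \cap \Supp(N)$ for $N$ maximal Cohen-Macaulay), it suffices to argue that $\Supp(\omega_R) = \Spec(R)$. This holds because $(\omega_R)_\fp$ is a canonical module of $R_\fp$ for every $\fp \in \Spec(R)$ and hence is nonzero. The corollary will then immediately yield
\[\Ass(M \otimes_R \omega_R) = \Ass(M) \cap \Spec(R) = \Ass(M).\]

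For (ii), I would take a minimal free resolution of $M$ of length $p := \pd(M)$, say
\[0 \to R^{b_p} \to R^{b_{p-1}} \to \cdots \to R^{b_0} \to M \to 0.\]
Since $\omega_R$ is maximal Cohen-Macaulay (a fortiori big Cohen-Macaulay), Lemma \ref{big} forces $\Tor^R_i(M,\omega_R) = 0$ for all $i \geq 1$. Hence applying $-\otimes_R \omega_R$ to the resolution preserves exactness and produces
\[0 \to \omega_R^{b_p} \to \omega_R^{b_{p-1}} \to \cdots \to \omega_R^{b_0} \to M \otimes_R \omega_R \to 0,\]
which is the desired $\omega_R$-resolution of $M \otimes_R \omega_R$ of length $p$.

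Neither step presents a serious obstacle; the whole proof amounts to checking that $\omega_R$ fits the hypotheses of Corollary \ref{cre}(i) and Lemma \ref{big}, the only noteworthy subtlety being the observation that canonical modules have full support and thus intersect $\Supp$ trivially in (i).
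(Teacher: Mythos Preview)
Your proposal is correct and follows essentially the same route as the paper: for (i) the paper simply notes $\Supp(\omega_R)=\Spec(R)$ and invokes Corollary \ref{cre}, and for (ii) it tensors a free resolution of length $p=\pd(M)$ with $\omega_R$ and uses Lemma \ref{big} to preserve exactness. Your added justification for why $\omega_R$ has full support is a welcome detail the paper leaves implicit.
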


 \begin{proof}
 i):	Recall that $\Supp(\omega_R)=\Spec(R)$. This  yields i), see Corollary \ref{cre}.
 
  ii): Let $p:=\pd(M)$. By definition, there is an exact sequence $0\to R^{\beta_p}\to\ldots\to R^{\beta_0} \to M\to 0$. Tensor it  with $-\otimes_R\omega_R$  we have $0\to \omega_R^{\beta_p}\to\ldots\to \omega_R^{\beta_0} \to M\otimes_R\omega_R\to 0$  which is exact by Lemma \ref{big}. By definition, $M\otimes_R \omega_R$ has an $\omega_R$-resolution of length $p$.
 \end{proof}

Concerning Corollary \ref{cre} a natural question arises: does maximal Cohen-Macaulay modules localize? Also, this was asked in MathOverflow, before than us.
Here, we affirmatively answer it:

\begin{corollary} \label{crew}
	Let $(R,\fm)$ be local,  $M$  of finite projective dimension  and $N$ be maximal  Cohen-Macaulay. 
	The following holds: 	
	\begin{enumerate}
		\item[i)] $N$ is locally maximal  Cohen-Macaulay over its support.
		\item[ii)]   	$\Ass(M\otimes_R N)\subseteq\Ass(M)\cap\Supp(N)$.
		\item[iii)] If $M$  is Cohen-Macaulay then  $\Ass(M\otimes_R N)= \Ass(M)\cap\Supp(N)$. In particular, $\Ass(M\otimes_R N)=\Ass(M)$ provided  
		$N$ is fully supported.	
	\end{enumerate}	
\end{corollary}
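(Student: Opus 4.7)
The overall plan is to deduce parts (ii) and (iii) from Proposition \ref{re}(i) applied after localizing at the prime $\fp$ in question, with part (i) supplying the key input that $N_\fp$ is still maximal Cohen-Macaulay over $R_\fp$.

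For part (i), let $\fp \in \Supp(N)$. Since $N$ is maximal Cohen-Macaulay, $\depth N = \dim N = \dim R$, so $N$ is in particular a Cohen-Macaulay $R$-module in the absolute sense. The key input is the classical dimension formula for Cohen-Macaulay modules over any noetherian local ring,
$$\depth_{R_\fp} N_\fp + \dim(R/\fp) = \depth_R N.$$
Combined with the general inequality $\dim R_\fp + \dim(R/\fp) \leq \dim R$, this yields $\depth N_\fp \geq \dim R_\fp$, while $\depth N_\fp \leq \dim N_\fp \leq \dim R_\fp$ is automatic. Equality throughout forces $N_\fp$ to be maximal Cohen-Macaulay over $R_\fp$.

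Granting (i), the plan for (ii) is: given $\fp \in \Ass(M \otimes_R N)$, one has $\fp \in \Supp(M \otimes_R N) = \Supp(M) \cap \Supp(N)$, so $\fp \in \Supp(N)$ and by (i) $N_\fp$ is maximal Cohen-Macaulay over $R_\fp$. Since $M_\fp$ has finite projective dimension over $R_\fp$, Proposition \ref{re}(i) applied over the local ring $R_\fp$ yields
$$\depth_{R_\fp} M_\fp \leq \depth_{R_\fp}(M \otimes_R N)_\fp \leq \dim M_\fp.$$
The hypothesis on $\fp$ forces the middle quantity to vanish, so $\depth M_\fp = 0$ and $\fp \in \Ass(M)$. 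For (iii), the $\subseteq$ direction is (ii); for $\supseteq$, take $\fp \in \Ass(M) \cap \Supp(N)$, use that Cohen-Macaulayness of $M$ makes its associated primes coincide with its minimal primes, so $\dim M_\fp = 0$, and run the same squeeze to conclude $\depth(M \otimes_R N)_\fp = 0$. The concluding ``in particular'' is immediate from $\Supp(N) = \Spec(R)$.

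The main technical obstacle is part (i). In Corollary \ref{cre} the Cohen-Macaulayness of $R$ let the author manufacture an $R$-regular sequence of length $\Ht(\fp)$ inside $\fp$ and transfer it to $N$ by maximal Cohen-Macaulayness. Without that hypothesis the same route fails, and one must go through the intrinsic dimension formula for Cohen-Macaulay modules; the essential point is that the slack in $\dim R_\fp + \dim(R/\fp) \leq \dim R$ is absorbed by the corresponding rigidity $\depth N_\fp + \dim(R/\fp) = \dim R$, forcing $N_\fp$ to be maximal Cohen-Macaulay.
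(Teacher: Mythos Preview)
Your approach matches the paper's: for (i) both you and the author squeeze $\depth N_\fp$ up to $\Ht(\fp)$ using the Cohen--Macaulay dimension formula together with $\Ht(\fp)+\dim(R/\fp)\leq\dim R$, and (ii)--(iii) are then deduced by localizing and applying Proposition~\ref{re}(i) exactly as you describe.

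One small correction in (i): the ``classical dimension formula'' you invoke should read
\[
\depth_{R_\fp} N_\fp + \dim(N/\fp N) \;=\; \depth_R N,
\]
with $\dim(N/\fp N)$ in place of $\dim(R/\fp)$ (this is the module form of Bruns--Herzog, Theorem~2.1.3(a), rewritten via $\depth=\dim$ for Cohen--Macaulay modules). The two quantities agree only \emph{a posteriori}, once the squeeze has been carried out---indeed the paper's proof records this equality as a byproduct. Since $\dim(N/\fp N)\leq\dim(R/\fp)$ always holds, your inequality $\depth N_\fp \geq \dim R - \dim(R/\fp) \geq \dim R_\fp$ survives after inserting this extra step, so the argument is sound.
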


\begin{proof}
	i): Let $\fp\in\Supp( N)$. First,   $N_{\fp}$ is    Cohen-Macaulay and nonzero.
It turns out that $$\dim(R)=\dim (N)=\dim(N_{\fp})+\dim(\frac{N}{\fp N})\quad(1)$$ Also, $\Ann(\frac{N}{\fp N})\supseteq \fp$, and so
\begin{enumerate}
\item[(2)]  $\dim(N_{\fp }) \leq \dim(R_{\fp})=\Ht(\fp)$,             
\item[(3)]   $\dim(\frac{N}{\fp N})=\dim(\frac{R}{\Ann(N / \fp N)})\leq \dim(\frac{R}{\fp })$,
\item[(4)]   $\dim(\frac{R}{\fp })+\Ht(\fp)\leq \dim(R)$.
	\end{enumerate}	
Then we have
\[\begin{array}{ll}
\dim(R)&\stackrel{(1)}=\dim(N_{\fp})+\dim(\frac{N}{\fp N})\\
&\stackrel{(2)}\leq \Ht(\fp) +\dim(\frac{N}{\fp N})\\
& \stackrel{(3)}\leq \Ht(\fp)+\dim(\frac{R}{\fp }) \\
&\stackrel{(4)}\leq \dim(R).
\end{array}\]
In particular, $\dim(N_{\fp})=\Ht(\fp)$. Since $N_{\fp}$ is Cohen-Macaulay, it follows that
$N_{\fp}$ is  maximal  Cohen-Macaulay. 

ii): Apply the first item along with the proof of Corollary \ref{cre}
and use Proposition \ref{re}(i) to see 
$$\Ass(M\otimes_R N)\subseteq\Ass(M)\cap\Supp(N),$$
as claimed.

iii): Due to the second item, $\Ass(M\otimes_R N)\subseteq\Ass(M)\cap\Supp(N)$. In view of Proposition  \ref{re}(i)
 $\depth(M)\leq\depth(M\otimes_R N)\leq \dim (M)$. Since $M$ is Cohen-Macaulay,  $\depth(M)=\depth(M\otimes_R N)$.
Now, the argument similar to Corollary \ref{cre} shows that  $\Ass(M\otimes_R N)= \Ass(M)\cap\Supp(N)$.
\end{proof}

\begin{corollary}  
	Let $(R,\fm)$ be local,  $M$  of finite projective dimension. Let $\Sigma$ be the class
		of all maximal Cohen-Macaulay modules. Then $\bigcup_{L\in\Sigma}\Ass(M\otimes_R L)$ is finite.
\end{corollary}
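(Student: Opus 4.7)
The plan is to observe that this is an immediate consequence of Corollary \ref{crew}(ii) together with the standard finiteness of associated primes of a finitely generated module.

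First I would fix an arbitrary $L \in \Sigma$, i.e., an arbitrary maximal Cohen-Macaulay $R$-module. By Corollary \ref{crew}(ii), which applies precisely because $\pd_R(M) < \infty$ and $L$ is maximal Cohen-Macaulay, we get the containment
\[
\Ass(M \otimes_R L) \;\subseteq\; \Ass(M) \cap \Supp(L).
\]
In particular, dropping the $\Supp(L)$ factor, $\Ass(M \otimes_R L) \subseteq \Ass(M)$.

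Since this containment holds uniformly in $L$, taking the union over $L \in \Sigma$ yields
\[
\bigcup_{L \in \Sigma} \Ass(M \otimes_R L) \;\subseteq\; \Ass(M).
\]
Because $M$ is a finitely generated module over the Noetherian ring $R$, the set $\Ass(M)$ is finite, and therefore so is the union on the left-hand side.

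I do not anticipate any real obstacle here; the work has already been done in establishing Corollary \ref{crew}(ii). The only point to be alert to is that Corollary \ref{crew}(ii) was proved without requiring $R$ to be Cohen-Macaulay (only the local hypothesis is needed), which matches the hypotheses of the present corollary. One might additionally note, as a remark rather than a step in the proof, that the bound by $\Ass(M)$ is independent of the class $\Sigma$ chosen, so the same conclusion holds for any subclass of maximal Cohen-Macaulay modules.
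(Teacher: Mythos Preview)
Your proof is correct and follows essentially the same route as the paper: apply Corollary \ref{crew}(ii) to get $\Ass(M\otimes_R L)\subseteq\Ass(M)$ for each $L\in\Sigma$, take the union, and invoke the finiteness of $\Ass(M)$ for a finitely generated module over a noetherian ring.
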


\begin{proof}
In the light of Corollary \ref{crew}(ii) we observe that
$\bigcup_{L\in\Sigma}\Ass(M\otimes_R L)\subseteq \Ass(M)$. Since
	$M$ is finitely generated, the set of its associated prime ideals is finite. So,  $\bigcup_{L\in\Sigma}\Ass(M\otimes_R L)$ is finite.
\end{proof}

We say a module $M$ is  primary if  its zero submodule is primary in $M$, i.e., $\Ass(M)=\{\fp\}$.

\begin{corollary} \label{pri}
	Let $(R,\fm)$ be local,  $M$ be primary  of finite projective dimension  and $N$ be maximal  Cohen-Macaulay. 
	Then
	$M\otimes_R N$ is primary.	In fact, $\Ass(M\otimes_R N)=\Ass(M)$.  
\end{corollary}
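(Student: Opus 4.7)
The plan is to combine the containment from Corollary \ref{crew}(ii) with a non-vanishing argument for the tensor product. The corollary immediately gives $\Ass(M\otimes_R N)\subseteq\Ass(M)\cap\Supp(N)$, and since $M$ is primary by hypothesis, $\Ass(M)=\{\fp\}$ for some prime $\fp$. Hence
\[
\Ass(M\otimes_R N)\subseteq\{\fp\}.
\]
The entire content of the corollary then reduces to showing that this containment is actually an equality, which is equivalent to showing that $\Ass(M\otimes_R N)$ is nonempty, i.e., that $M\otimes_R N\neq 0$.

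Non-vanishing is the only thing that still requires an argument, and it will follow from a routine application of Nakayama's lemma. Since $R$ is local with residue field $k$, both $M$ and $N$ are finitely generated, and neither is zero (the module $M$ has a well-defined associated prime $\fp$, and $N$ is maximal Cohen-Macaulay of dimension $\dim R$), the quotients $M\otimes_R k$ and $N\otimes_R k$ are nonzero $k$-vector spaces. Therefore
\[
(M\otimes_R N)\otimes_R k \;\cong\; (M\otimes_R k)\otimes_k (N\otimes_R k) \;\neq\; 0,
\]
which forces $M\otimes_R N\neq 0$ and hence $\Ass(M\otimes_R N)\neq\emptyset$.

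Combining the two observations, $\Ass(M\otimes_R N)$ is a nonempty subset of the singleton $\{\fp\}$, so
\[
\Ass(M\otimes_R N)=\{\fp\}=\Ass(M),
\]
which is exactly the assertion that $M\otimes_R N$ is primary with the same associated prime as $M$. There is no genuine obstacle here; the whole argument is essentially a one-line consequence of Corollary \ref{crew}(ii) together with the standard fact that the tensor product of two nonzero finitely generated modules over a local ring is nonzero.
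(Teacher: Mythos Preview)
Your proof is correct and follows the same line as the paper's: invoke Corollary~\ref{crew}(ii) to get $\Ass(M\otimes_R N)\subseteq\{\fp\}$, and then use $M\otimes_R N\neq 0$ to conclude equality. The only difference is that the paper simply asserts $\Ass(M\otimes_R N)\neq\emptyset$ without justification, whereas you supply the Nakayama argument explicitly; this is a welcome clarification but not a different approach.
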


\begin{proof}
	Let 	$\fp\in\Spec(R)$  be such that $\Ass(M)=\{\fp\}$. Recall that $\Ass(L)\neq \emptyset$ if and only if $L\neq 0$. In view of Corollary \ref{crew}(ii) we see $\emptyset\neq\Ass(M\otimes_R N)\subseteq \Ass(M)\cap\Supp(N)=\{\fp\}\cap\Supp(N)\subset \{\fp\}.$ So, $\Ass(M\otimes_R N)= \{\fp\}.$ By definition, 	$M\otimes_R N$ is primary.
\end{proof}

The following completes the proof of Corollary \ref{1.3} from the introduction.

\begin{corollary} \label{cre1}
	Let $R$ be a local ring,  $I\lhd R$  be of finite projective dimension  and $N$ be maximal  Cohen-Macaulay. 
	If $\Ht(I)>0$, then   $\Ass(I\otimes_R N)=\Ass(N)$.	
\end{corollary}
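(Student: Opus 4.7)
The plan is to exploit the short exact sequence
\[
0 \lo I \lo R \lo R/I \lo 0
\]
by tensoring with $N$. Since $\pd_R(I)<\infty$, we also have $\pd_R(R/I)<\infty$. The maximal Cohen-Macaulay hypothesis on $N$ then puts us squarely in the setting of Lemma \ref{big}, giving $\Tor^R_1(R/I,N)=0$. Thus tensoring produces a short exact sequence
\[
0 \lo I\otimes_R N \lo N \lo N/IN \lo 0.
\]
From this embedding $I\otimes_R N\hookrightarrow N$, the inclusion $\Ass(I\otimes_R N)\subseteq\Ass(N)$ is immediate.

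For the reverse inclusion, I plan to pick $\fp\in\Ass(N)$ and argue that $I\not\subseteq \fp$. The key observation is that, for any $\fp\in\Ass(N)$, one has the standard inequality $\depth(N)\leq\dim(R/\fp)$; since $\depth(N)=\dim R$, this forces $\dim(R/\fp)=\dim R$, and then $\Ht(\fp)+\dim(R/\fp)\leq\dim R$ yields $\Ht(\fp)=0$. Consequently $\fp$ is a minimal prime of $R$, and the hypothesis $\Ht(I)>0$ rules out $I\subseteq\fp$.

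With $I\not\subseteq\fp$, we have $I_{\fp}=R_{\fp}$, hence
\[
(I\otimes_R N)_{\fp} \;=\; I_{\fp}\otimes_{R_{\fp}} N_{\fp} \;=\; N_{\fp}.
\]
Since $\fp R_{\fp}\in\Ass_{R_{\fp}}(N_{\fp})$, this gives $\fp R_{\fp}\in\Ass_{R_{\fp}}((I\otimes_R N)_{\fp})$, whence $\fp\in\Ass(I\otimes_R N)$ by the standard localization property of associated primes. Combining the two inclusions proves the claim.

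I do not expect a serious obstacle: the Tor-vanishing from Lemma \ref{big} makes the exact sequence automatic, and the only mildly delicate point is ensuring that $\Ass(N)$ consists of height-zero primes without assuming $R$ Cohen-Macaulay or catenary — which is handled by the one-line inequality $\Ht(\fp)+\dim(R/\fp)\leq\dim R$ available in every Noetherian local ring.
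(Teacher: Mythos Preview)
Your proof is correct and shares the same backbone as the paper's: both use Lemma~\ref{big} to obtain the exact sequence $0 \to I\otimes_R N \to N \to N/IN \to 0$, read off $\Ass(I\otimes_R N)\subseteq\Ass(N)$ from the injection, and rely on the observation that every $\fp\in\Ass(N)$ has height zero (the paper phrases this as ``$N$ is equi-dimensional and of dimension $\dim R$''). The difference lies in the reverse inclusion. The paper uses the standard splitting $\Ass(N)\subseteq\Ass(N/IN)\cup\Ass(I\otimes_R N)$ together with Corollary~\ref{crew}(ii), which bounds $\Ass(N/IN)\subseteq\Ass(R/I)\cap\Supp(N)$; since a height-zero $\fp$ cannot lie in $\Ass(R/I)$ when $\Ht(I)>0$, one is forced into $\fp\in\Ass(I\otimes_R N)$. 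Your route is more direct: from $I\not\subseteq\fp$ you localize and identify $(I\otimes_R N)_\fp\cong N_\fp$ outright. This bypasses Corollary~\ref{crew}(ii) entirely (and with it the appeal to Proposition~\ref{re} and Auslander's depth formula), so for the reverse inclusion your argument is genuinely more elementary and needs nothing beyond the Tor-vanishing already established for the forward one.
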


\begin{proof}
	 We look at $0\to I\to R\to R/I\to 0$ and recall from    Lemma \ref{big}  that $\Tor^R_1(R/I,N)=0$.
	We 	tensor the short exact sequence with $-\otimes_RN$ and obtain the following exact sequence $$0=\Tor^R_1(R/I,N)\lo I\otimes_R N\lo N\lo R/I\otimes_R N\lo 0.$$
	This yields $$\Ass(I\otimes_R N)\subseteq\Ass(N)\subseteq\Ass ( N/IN) \cup \Ass(I\otimes_R N)\quad(+)$$
	Recall from Corollary \ref{crew}(ii) that
	$\Ass(R/I\otimes_R N)\subseteq \Ass(R/I)\cap\Supp(N)$.
	We plug this $(+)$ and observe that
	\[\begin{array}{ll}
	\Ass(I\otimes_R N)\subseteq\Ass(N)& \subseteq( \Ass(R/I)\cap\Supp(N))  \cup \Ass(I\otimes_R N)\quad(\ast)
	\end{array}\]
	Recall that $N$ is equi-dimensional and of dimension equal to $\dim(R)$. Now, let $\fp\in\Ass(N)$. Since $\Ht(I)>0$,
	$\fp\notin \Ass(R/I)$. By $(\ast)$, $\fp\in\Ass(I\otimes_R N)$, i.e., $\Ass(N)\subseteq \Ass(I\otimes_R N)$.
	In sum, $$\Ass(I\otimes_R N)\subseteq\Ass(N)\subseteq\Ass(I\otimes_R N),$$and the desired claim  follows.
\end{proof}

\begin{example}
	Concerning Corollary \ref{cre1}, the first  item  shows that  the assumption $\pd(I)<\infty$ is needed even if we assume $\Gdim(I)<\infty$.
	The second item shows that the   maximal  Cohen-Macaulay  assumption  is needed. 
	\begin{enumerate}
		\item[i)]  Let $(R,\fm,k)$ be any one-dimensional local integral domain which is not regular. Let $I:=\fm$ and look at the  maximal  Cohen-Macaulay module
		$N:=\fm$. We claim  that $\Ass(I\otimes_R N)\neq\Ass(N)$.  Indeed, we look at $0\to I \to R\to k\to 0$
		and we drive the following exact sequence $0\to \Tor_1^R(k,N)\to I\otimes_R N\to N\to N\otimes_R k\to 0$.
		Recall that   and $\Tor_1^R(k,N)\simeq\Tor_2^R(k,R/I)\simeq k^{\beta_2(R/I)}\neq 0$. Since
		this nonzero module is annihilated by $\fm$, we deduce that
		$\HH^0_{\fm}(I\otimes_R N)\neq 0$, i.e., $\fm\in\Ass(I\otimes_R N)$. This yields the first claim.
		If we assume in addition that $R$ is Gorenstein, then $\Gdim(I)=0<\infty$.
		\item[ii)] Let $(R,\fm)$ be any regular local ring of dimension greater than one. Let $I:=\fm$ which is of finite
		projective dimension and look at
		$N:=\fm$. Similar to the first item, we have  $\Ass(I\otimes_R N)\neq\Ass(N)$. 
	\end{enumerate}
\end{example}

It may be nice to know  when  a prime ideal is in the support of a  maximal  Cohen-Macaulay module:

\begin{corollary} \label{supp} 
	Let $R$ be   local   and $N$ be maximal  Cohen-Macaulay. 
	Then  $$\{\fp\in\Spec(R):\exists0\neq I\lhd R\emph{ s.t. }  \pd(I)<\infty \emph{ and } \Ass(\frac{R}{I}) =\{\fp\}\}\subseteq\Supp(N)\setminus\Ass(R).$$ 
\end{corollary}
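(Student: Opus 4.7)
The plan is to prove the two desired containments $\fp\in\Supp(N)$ and $\fp\notin\Ass(R)$ separately. For the first, since $\Ass(R/I)=\{\fp\}$ the module $R/I$ is primary with finite projective dimension, and Corollary~\ref{pri} gives directly $\Ass(R/I\otimes_R N)=\Ass(R/I)=\{\fp\}$. In particular $R/I\otimes_R N\neq 0$, so $\fp\in\Supp(R/I\otimes_R N)\subseteq\Supp(N)$; this handles the easy half.

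For $\fp\notin\Ass(R)$, I would argue by contradiction. Assume $\fp\in\Ass(R)$, so that $\depth R_\fp=0$. Since $\pd_{R_\fp}((R/I)_\fp)\le\pd_R(R/I)<\infty$ and $(R/I)_\fp\neq 0$ (as $\fp\in\Ass(R/I)\subseteq\Supp(R/I)$), the Auslander--Buchsbaum formula applied over $R_\fp$ forces $\pd_{R_\fp}((R/I)_\fp)=0$. Hence $(R/I)_\fp$ is a nonzero cyclic free $R_\fp$-module, necessarily isomorphic to $R_\fp$; equivalently $I_\fp=0$, so $\fp\notin\Supp(I)$.

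To close the argument, invoke Auslander's zero-divisor theorem (a standard consequence of the intersection theorem already used in Fact~\ref{YD}) for the finite projective dimension module $R/I$: every $R$-zero-divisor is an $R/I$-zero-divisor, which yields $\bigcup_{\fq\in\Ass(R)}\fq\subseteq\bigcup_{\fq\in\Ass(R/I)}\fq=\fp$. Thus every associated prime of $R$ lies inside $\fp$. Now $I\neq 0$ implies $\Ass(I)\neq\emptyset$, and the inclusion $I\hookrightarrow R$ yields $\Ass(I)\subseteq\Ass(R)$. Picking any $\fq\in\Ass(I)$ gives $\fq\subseteq\fp$, and since $\Supp(I)=V(\Ann(I))$ is closed under specialization and contains $\fq$, one gets $\fp\in V(\fq)\subseteq\Supp(I)$, contradicting $\fp\notin\Supp(I)$.

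The main obstacle is the second half, where the local vanishing $I_\fp=0$ must be reconciled with the global hypothesis $I\neq 0$. Auslander's zero-divisor theorem is the pivotal ingredient: it is exactly what converts $\fp\in\Ass(R)$ into the assertion that all primes in $\Ass(R)$ specialize into $\fp$, producing the clash with the localized calculation.
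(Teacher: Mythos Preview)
Your argument is correct. The first half (showing $\fp\in\Supp(N)$ via Corollary~\ref{pri}) matches the paper exactly.

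For the second half the paper takes a shorter route. It simply quotes the standard consequence of the intersection theorem that any nonzero ideal $I$ with $\pd_R(R/I)<\infty$ contains an $R$-regular element; since $I\subseteq\rad(I)=\fp$, the prime $\fp$ then contains a non-zerodivisor and therefore cannot lie in $\Ass(R)$. Your approach instead localizes at $\fp$, uses Auslander--Buchsbaum to force $I_\fp=0$, and then invokes the zero-divisor theorem together with $\Ass(I)\subseteq\Ass(R)$ to manufacture a prime in $\Supp(I)$ specializing to $\fp$. Both arguments rest on the same deep input (the intersection theorem / zero-divisor conjecture), but the paper packages it as the single fact ``$I$ has a regular element'' and is done in one line, whereas you effectively reprove a special case of that fact by hand. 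Your version has the virtue of being self-contained, at the cost of some length; note in particular that your localization step (items leading to $I_\fp=0$) is not actually needed once you have the zero-divisor theorem, since the latter already gives $\fq\subseteq\fp$ for every $\fq\in\Ass(R)$, and combining this with any $\fq\in\Ass(I)\subseteq\Ass(R)$ forces $\fp\in\Supp(I)$, which directly contradicts $I_\fp=0$---so the two steps are really two halves of one contradiction rather than independent observations.
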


\begin{proof} 
	Let 	$\fp\in\Spec(R)$ be such that there is a  $\fp$-primary ideal $I	$ 
	of finite projective dimension.
	Set $M:=R/ I$ and recall that $\emptyset\neq\Ass(M\otimes_R N)\subseteq \{\fp\}\cap\Supp(N).$  So, $\fp\in\Supp(N)$.
	In order to prove $\fp\not\in\Ass(R)$ we remark that $\cup_{\fq\in\Ass(R)}\fq$
	consists of zero-divisors of $R$ and that any non-trivial ideal of finite projective dimension
	equipped with  a regular element.
\end{proof}

To see maximal  Cohen-Macaulay  modules are not necessarily full supported, we look at 
$R:=k[[x,y]]/(xy)$ and the   maximal  Cohen-Macaulay module  $N:=R/(x)$. Then $\Supp(N)=\{(x),\fm\}\subsetneqq\{(x),(y),\fm\}
=\Spec(R)$. In this example,$$\{\fp\in\Spec(R):\exists I\lhd R\emph{ s.t. }  \pd(I)<\infty \emph{ and } I \emph{ is }\fp-primary\}=\Supp(N)\setminus\Ass(R).$$ 

\begin{definition}
	By Cohen-Macaulay defect we mean $\dim(M)-\depth(M)$
and we denote it by $\CMdef(M)$.
\end{definition}

\begin{corollary}\label{ap}
	Let $R$ be Cohen-Macaulay, $M$  of finite projective dimension and $N$ be maximal  Cohen-Macaulay. Then  $\CMdef(M\otimes_R N)=\CMdef(M)$.
\end{corollary}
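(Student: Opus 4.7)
The plan is to prove the defect equality by establishing $\depth(M\otimes_R N)=\depth(M)$ and $\dim(M\otimes_R N)=\dim(M)$ separately, then subtracting. The first of these equalities is immediate from Proposition~\ref{re}(ii), which applies directly under the hypotheses that $R$ is Cohen-Macaulay, $M$ has finite projective dimension, and $N$ is maximal Cohen-Macaulay.

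For the dimension equality, the upper bound $\dim(M\otimes_R N)\le\dim(M)$ is immediate from the support identity $\Supp(M\otimes_R N)=\Supp(M)\cap\Supp(N)$. For the reverse inequality, I would exploit Corollary~\ref{cre}(i), which yields $\Ass(M\otimes_R N)=\Ass(M)\cap\Supp(N)$, reducing the task to exhibiting a prime $\fp\in\Ass(M)$ with $\dim(R/\fp)=\dim(M)$ and $\fp\in\Supp(N)$. The natural candidate is any minimal prime $\fp$ of $\Supp(M)$ achieving the maximal dimension $\dim(M)$, which automatically lies in $\Ass(M)$.

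The main obstacle is to verify that such $\fp$ lies in $\Supp(N)$. To this end I would combine two facts: Corollary~\ref{crew}(i) implies $\Ass(N)\subseteq\Min(R)$ (since localizing a maximal Cohen-Macaulay module at an associated prime yields depth zero, and in a Cohen-Macaulay ring this forces the prime to be minimal); and the Peskine-Szpiro zero-divisor theorem for $M$ of finite projective dimension ensures that every $\fq\in\Ass(R)$ is contained in some prime of $\Ass(M)$. Coupling these with the Cohen-Macaulay formula $\Ht(\fp)+\dim(R/\fp)=\dim(R)$ to track the heights of the resulting covering primes should force a maximal-dimension prime of $\Ass(M)$ into $\Supp(N)$. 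With both equalities in place, the final assembly is routine:
\[
\CMdef(M\otimes_R N)=\dim(M\otimes_R N)-\depth(M\otimes_R N)=\dim(M)-\depth(M)=\CMdef(M).
\]
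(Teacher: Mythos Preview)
Your treatment of the depth equality coincides with the paper's: both invoke Proposition~\ref{re}(ii) (ultimately the depth formula $(\ast)$ together with Auslander--Buchsbaum). The divergence is entirely in the dimension step. The paper does not argue via associated primes at all; it simply notes that $N$ maximal Cohen--Macaulay forces $\dim(N)=\dim(R)$, so Fact~\ref{YD} applies directly, and combining its dimension bounds with the already-established depth equality yields the defect comparison in one line.

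Your route through Corollary~\ref{cre}(i) is considerably more elaborate, and the crucial step is left as a heuristic: you assert that combining $\Ass(N)\subseteq\Min(R)$, the Peskine--Szpiro zero-divisor theorem, and the height formula ``should force'' a maximal-dimension prime of $\Ass(M)$ into $\Supp(N)$, but you do not actually carry this out. In fact the listed ingredients do not assemble cleanly into that conclusion. The zero-divisor theorem guarantees that each $\fq\in\Ass(R)$ is contained in \emph{some} $\fp'\in\Ass(M)$, but gives no control over $\dim(R/\fp')$; the covering prime need not be of maximal dimension in $\Ass(M)$. Conversely, a prime $\fp\in\Ass(M)$ achieving $\dim(R/\fp)=\dim(M)$ certainly contains some minimal prime of $R$, but nothing forces that minimal prime to lie in $\Ass(N)$, which may be a proper subset of $\Min(R)$. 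So the chain of implications you outline does not close, and this is a genuine gap. If you insist on the associated-prime approach you need an additional idea; the paper's approach via Fact~\ref{YD} sidesteps this entirely by appealing to the intersection theorem for the lower bound on $\dim(M\otimes_R N)$.
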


\begin{proof}
Recall from Proposition \ref{re}$(\ast)$    that $\depth_R (M\otimes_R  N)  =\dim(R)-\pd_R (M)$. Since $R$ is 
Cohen-Macaulay, and in view of Auslander-Buchsbaum formula,
$\depth_R (M\otimes_R  N) =\depth(M).$ We combine this with Fact \ref{YD} to observe that
 $\dim (M\otimes_R N) -\depth (M\otimes_R  N)= \dim(M)-\depth(M).$ By definition, $\CMdef(M\otimes_R N)=\CMdef(M)$. 
\end{proof}

\begin{definition}\label{def}
	Suppose $\pd(M)<\infty$. By \it{perfect defect} of $M$, we mean $\pdef(M):=\pd(M)-\grade(M)$.
\end{definition} 
\begin{lemma}\label{l}
	Let $R$ be formally equidimensional, $M$  of finite projective dimension and $N$  maximal 
	 Cohen-Macaulay. Then $\depth(M\otimes_R N)=\dim(M)-\pdef(M)$.
\end{lemma}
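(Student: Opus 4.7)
The plan is to compute $\depth(M\otimes_R N)$ by Auslander's depth formula, and then convert the answer into the target form by invoking the identity $\dim R=\dim M+\grade M$, which is where the formally equidimensional hypothesis on $R$ is spent.

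First, since $N$ is maximal Cohen-Macaulay it is in particular big Cohen-Macaulay, so Lemma \ref{big} gives $\Tor^R_+(M,N)=0$. Taking $q=0$ in Fact \ref{aus} then yields
\[
\depth_R(N)=\depth_R(M\otimes_R N)+\pd_R(M),
\]
and since $\depth_R(N)=\dim R$ this rearranges to
\[
\depth_R(M\otimes_R N)=\dim R-\pd_R(M).
\]

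Next, the hypothesis that $R$ is formally equidimensional enters. By Ratliff's theorem $R$ is (universally) catenary and equidimensional, whence $\Ht(\fp)+\dim(R/\fp)=\dim R$ for every $\fp\in\Spec(R)$; applied to a minimal prime $\fp$ of $\Ann M$ with $\dim(R/\fp)=\dim M$ this gives $\Ht(\Ann M)=\dim R-\dim M$. Coupling this with the equality $\grade M=\Ht(\Ann M)$, which is a standard consequence of the new intersection theorem for modules of finite projective dimension over a formally equidimensional local ring, one obtains $\dim R=\dim M+\grade M$. Substituting into the previous display,
\[
\depth_R(M\otimes_R N)=\dim M+\grade M-\pd M=\dim M-(\pd M-\grade M)=\dim M-\pdef(M),
\]
which is the claim.

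The principal obstacle is the second step. One direction $\grade M\leq \Ht(\Ann M)$ is formal, but the reverse inequality $\grade M\geq \dim R-\dim M$ is nontrivial: it requires both the finiteness of $\pd_R(M)$ and the formally equidimensional hypothesis on $R$, and is the point where the new intersection theorem really does work. Without this input the asserted formula can fail, as the $\G$-dimension example \ref{ndformula}(i) already illustrates for the closely related inequality in Proposition \ref{re}(i).
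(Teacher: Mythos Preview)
Your proof is correct and follows essentially the same route as the paper: first Tor-vanishing (Lemma~\ref{big}) plus Auslander's depth formula (Fact~\ref{aus}) to obtain $\depth(M\otimes_R N)=\dim R-\pd_R(M)$, and then the identity $\dim R=\dim M+\grade M$ to finish. The only difference is cosmetic: the paper simply cites \cite{ro} for that identity, whereas you unpack it via Ratliff's theorem and the new intersection theorem.
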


\begin{proof}
The assumptions guarantee that $\dim(R)=\dim(M)+\grade(M)$ (see \cite{ro}) and $\Tor^R_+(M,N)=0$. 
Recall from Fact \ref{aus} that $\depth_R (M\otimes_R  N)   =\depth_R (N)-\pd_R (M)$. Combining these,   we get the desired claim.
\end{proof}

The following is a generalization of Fact \ref{22}:

\begin{corollary}\label{cmc}
A local ring	$R$  is Cohen-Macaulay  iff  $\pdef(M)=\CMdef(M)$  for all $\pd(M)<\infty$. 
\end{corollary}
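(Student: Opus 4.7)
The plan is to prove the two implications separately; both are arithmetic consequences of standard formulas, and neither presents a serious obstacle.

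For the forward implication, assume $R$ is Cohen-Macaulay and fix $M$ with $\pd(M)<\infty$. A Cohen-Macaulay local ring is in particular formally equidimensional, so the same grade/dimension identity already used in the proof of Lemma \ref{l} (via Roberts) gives $\dim(M)+\grade(M)=\dim(R)$. Simultaneously, Auslander-Buchsbaum yields $\pd(M)+\depth(M)=\depth(R)=\dim(R)$. Subtracting these two equalities produces
\[
\pd(M)-\grade(M)\;=\;\dim(M)-\depth(M),
\]
which is precisely $\pdef(M)=\CMdef(M)$.

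For the reverse implication, the idea is to test the hypothesis on the single module $M=R$. Since $\pd(R)=0$ and $\Ext^0_R(R,R)=R\neq 0$, the definition of grade recalled in Section 2 gives $\grade(R)=0$, whence $\pdef(R)=0$. The standing hypothesis then forces $\CMdef(R)=\dim(R)-\depth(R)=0$, which is exactly the Cohen-Macaulay condition on $R$.

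The only conceptual content is recognizing the correct test module for the converse; once $M=R$ is plugged in, both directions collapse to a one-line manipulation of Auslander-Buchsbaum together with the dimension/grade identity for formally equidimensional rings. In this sense the corollary should be viewed as a direct reformulation of Fact \ref{22}: what Fact \ref{22} asserts for a single module, the present statement asserts uniformly across all modules of finite projective dimension, and the ring-level Cohen-Macaulay property is exactly the obstruction recovered by evaluating at $M=R$.
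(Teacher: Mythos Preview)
Your proof is correct and follows essentially the same approach as the paper: for the forward direction the paper simply applies Lemma~\ref{l} with $N=R$, which amounts to exactly the combination of the grade/dimension identity and Auslander--Buchsbaum that you write out explicitly, and for the converse both you and the paper test on $M=R$.
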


\begin{proof}
	Suppose $R$  is Cohen-Macaulay and $\pd(M)<\infty$. Apply $N:=R$ in Lemma \ref{l} to see
	$\depth(M)=\dim(M)-\pdef(M)$. By definition, 	$\depth(M)=\dim(M)-\CMdef(M)$.
	Combining these, $\pdef(M)=\CMdef(M)$. Conversely, assume $\pdef(M)=\CMdef(M)$  for all $\pd(M)<\infty$. 
	Apply this for $M:=R$, we see $0=\pdef(R)=\CMdef(R)$, i.e., $R$  is Cohen-Macaulay.
\end{proof}

\section{Being integral domain}

In Corollary \ref{pri} we were interested on primary modules of finite projective dimension.
Symbolic  powers  of primes is the natural source to produce primary modules.
Recall that
the $n$-th symbolic power of $I$ is denoted by $I^{(n)}$.
For example, $\frac{R}{\fp^{(n)}}$ is primary. If it has finite projective dimension
it imposes some conditions on the ring.
Concerning the following observation,  the case $n=1$ is known, see   \cite[Corollary II.3.3]{PS2}.
For more partial results with $n=1$, see \cite[Lemma 4.9]{h} and \cite[Abstract]{d}.

\begin{observation}\label{o} Let $R$ be a  local  ring  and $\fp \in\Spec(R)$.
	If $\pd(R/ \fp^{(n)}) < \infty$ for some  $n\in\mathbb{N}$ and $\fp^{(n)}\neq 0$,    then  $R$ is an integral domain.
\end{observation}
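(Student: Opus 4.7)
The plan is to reduce to Peskine--Szpiro's classical theorem recalled in the introduction (the $n=1$ case): the presence of a prime ideal of finite projective dimension forces $R$ to be an integral domain. Since $\fp^{(n)}$ is $\fp$-primary by definition, the module $M := R/\fp^{(n)}$ is a nonzero primary $R$-module with $\Ass(M) = \{\fp\}$ and $\pd_R(M) < \infty$, squarely in the setting of Corollary~\ref{pri}.

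First I would localize at $\fp$: $\pd_{R_\fp}(R_\fp/\fp^n R_\fp) \leq \pd_R(M) < \infty$, and the module is a nonzero Artinian $R_\fp$-module (its support is the singleton $\{\fp R_\fp\}$). Auslander--Buchsbaum together with the new intersection theorem then give that $R_\fp$ is Cohen--Macaulay with $\pd_{R_\fp}(R_\fp/\fp^n R_\fp) = \Ht(\fp)$. Next, I would leverage $\fp^{(n)} \neq 0$ to upgrade this to $R_\fp$ being regular, splitting on $\Ht(\fp)$. In the positive-height case, a filtration by the submodules $\fp^i R_\fp/\fp^n R_\fp$ together with iterated short exact sequences ending in $k_\fp = R_\fp/\fp R_\fp$ pulls the finite projective dimension down to the residue field, so $R_\fp$ is regular by Auslander--Buchsbaum--Serre. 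In the height-zero case, where $\fp$ is a minimal prime of $R$, we get $\pd_{R_\fp}(R_\fp/\fp^n R_\fp) = 0$, forcing $\fp^n R_\fp = 0$; the nonvanishing $\fp^{(n)} \neq 0$ then forces $\ker(R \to R_\fp)$ to be nonzero, and pairing this with Corollary~\ref{pri} applied to $M$ tensored against a (big) maximal Cohen--Macaulay $R$-module yields the required contradiction in this degenerate subcase.

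With $R_\fp$ regular, hence a domain, every minimal prime $\fq \subseteq \fp$ of $R$ contracts to zero in $R_\fp$. Corollary~\ref{pri}, or its big Cohen--Macaulay variant afforded by Lemma~\ref{big}, applied to $M$ then rules out the existence of minimal primes of $R$ not contained in $\fp$ (any such prime would produce associated primes of $M \otimes_R N$ outside $\{\fp\}$, violating the primary conclusion of Corollary~\ref{pri}). So $(0)$ becomes the unique minimal prime of $R$ and $R$ is an integral domain.

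The main obstacle is the middle step. Finite projective dimension of $R_\fp/\fp^n R_\fp$ does \emph{not} by itself force $R_\fp$ to be regular --- witness the counterexample $R = k[x]/(x^2)$ from the introduction, where $\fp^{(2)} = 0$ makes $R/\fp^{(2)}$ trivially free while $R$ fails to be a domain. The nonvanishing hypothesis $\fp^{(n)} \neq 0$ must be deployed precisely, and the two height subcases require quite different arguments --- the height-zero degeneracy being closely tied to exactly the counterexample that motivates the observation.
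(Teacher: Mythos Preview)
Your positive–height step is the main gap. From $\pd_{R_\fp}(R_\fp/\fp^n R_\fp)<\infty$ and Artinian support you do get that $R_\fp$ is Cohen--Macaulay, but the filtration argument cannot carry you to regularity: finite projective dimension does \emph{not} descend through a filtration whose graded pieces are copies of the residue field. Over any non–regular Cohen--Macaulay local ring $(S,\fm)$ with a regular system of parameters $\underline{x}$, the finite–length module $S/(\underline{x})$ has finite projective dimension and such a filtration, yet $\pd_S(k)=\infty$. What is actually needed here is the Levin--Vasconcelos theorem: if some \emph{nonzero} power of the maximal ideal has finite projective dimension then the ring is regular. But invoking it requires $\fp^n R_\fp\neq 0$, which you have not established — the hypothesis $\fp^{(n)}\neq 0$ only gives this once you know the localization map $R\to R_\fp$ is injective.

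That injectivity is the missing idea throughout your argument, and it is precisely what the paper supplies via Auslander's zero–divisor conjecture: any $x\notin\fp$ is regular on the $\fp$–primary module $R/\fp^{(n)}$, hence (by the zero–divisor conjecture, a consequence of the intersection theorem) regular on $R$, so $\ker(R\to R_\fp)=0$. With this in hand one gets $\fp^n R_\fp\neq 0$, Levin--Vasconcelos gives $R_\fp$ regular, and then $R\hookrightarrow R_\fp$ immediately makes $R$ a domain. Your appeals to Corollary~\ref{pri} in the height–zero subcase and in the final step are not doing the work you want: knowing $\Ass(M\otimes_R N)=\{\fp\}$ for maximal Cohen--Macaulay $N$ says nothing direct about the minimal primes of $R$ or about the kernel of $R\to R_\fp$, and even a unique minimal prime contained in $\fp$ would not make $R$ a domain without the injectivity of that localization map.
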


\begin{proof}
	We look at  the localization map $f:R\to R_{\fp}$. Let $r\in\ker(f)$. Then
	there is $x\in R\setminus \fp$ such that $rx=0$. Recall that $\fp^{(n)}$ is $\fp$-primary. Then
	$\frac{R}{\fp^{(n)}}\stackrel{x}\lo \frac{R}{\fp^{(n)}}$ is injective as $x\notin\rad(\fp^{(n)})=\fp$.
	In other words, $x$ is $\frac{R}{\fp^{(n)}}$-sequence.
	Now, we apply the assumption $\pd(\frac{R}{\fp^{(n)}}) < \infty$ along with Auslander's zero-divisor conjecture
	(follows from Roberts' intersection theorem) to observe that $x$ is $R$-sequence. From this $r=0$, i.e., 
	$f$ is injective. Now recall that
	$\fp^{(n)}=\fp^n R_{\fp}
	\cap R$. Thus, $$\fp^{(n)}R_{\fp}=( \fp^n R_{\fp}
	\cap R)R_{\fp}=\fp^n R_{\fp}.$$  Since $\fp^{(n)}\neq 0$ we see $\fp^n R_{\fp}\neq 0.$
	In particular, some nonzero powers of the maximal ideal of $R_{\fp}$ is of finite projective dimension. By a result of Levin and Vasconcelos \cite[Theorem 1.1]{lv} we know  $R_{\fp}$ is regular and local. 
	Regular local rings are domain. From this $R$ is an integral domain.
\end{proof}
The  nonvanishing of $\fp^{(n)}$ is important. For example, let $R:=k[X]/(X^2)$ and $\fp:=(x)$. Then $\fp^2=0$ and consequently, $\fp^{(2)}=0$. So, $\fp^{(2)}$
is free and $R$ is not an integral domain. In this regard, we have:

\begin{corollary}Let $R$ be a  reduced local  ring  and $\fp \in\Spec(R)$.
	If $\pd(R/ \fp^{(n)}) < \infty$ for some  $n\in\mathbb{N}$,    then  $R$ is an integral domain.
\end{corollary}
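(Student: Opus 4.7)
The plan is to split into two cases depending on whether $\fp^{(n)}$ vanishes, and reduce one case to Observation \ref{o} while dispatching the other case using the reducedness hypothesis.

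First I would handle the case $\fp^{(n)} \neq 0$. Here the hypotheses of Observation \ref{o} are all satisfied ($R$ local, $\fp$ prime, $\pd(R/\fp^{(n)}) < \infty$, and $\fp^{(n)} \neq 0$), so I can quote that observation directly to conclude that $R$ is an integral domain. No additional work is needed in this case; the reducedness hypothesis is not even used.

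Next I would handle the remaining case $\fp^{(n)} = 0$. The key observation is that $\fp^n \subseteq \fp^{(n)}$, so $\fp^n = 0$. Then for any $x \in \fp$, we have $x^n \in \fp^n = 0$, so $x$ is nilpotent in $R$. Since $R$ is reduced by hypothesis, $x = 0$, and therefore $\fp = 0$. As $\fp$ is prime, this means the zero ideal is prime, which is exactly the statement that $R$ is an integral domain.

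Combining both cases yields the claim. There is no real obstacle here: the only subtle point is recognizing that reducedness is precisely the tool that rescues the degenerate case $\fp^{(n)} = 0$ that was explicitly excluded in Observation \ref{o}, and that this degenerate case actually forces $\fp$ itself to vanish rather than merely forcing $\fp$ to be nilpotent.
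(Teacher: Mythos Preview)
Your proof is correct and follows essentially the same idea as the paper's: both reduce to Observation \ref{o} once $\fp^{(n)} \neq 0$, and both use reducedness to dispose of the degenerate case by forcing $\fp = 0$. The paper phrases the case split on $\fp$ rather than on $\fp^{(n)}$ and adds an extra (redundant, given Observation \ref{o}) argument via regular elements to re-establish $\fp^n R_{\fp} \neq 0$, but the substance is the same and your presentation is arguably cleaner.
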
 

\begin{proof} 
	We may assume that $\fp\neq 0$. Since $0\neq\fp^{n}\subset \fp^{(n)}$, we deduce that
	$\fp^{(n)}$ is nonzero and of finite projective dimension, in particular it has a regular element. Since $\rad(\fp^{(n)})=\fp$ it follows that $\fp$
	has a regular element. From this, $\dim(R_{\fp})\geq \depth(R_{\fp})\geq 1$.
	In other words,  $\fp^n R_{\fp}\neq 0$.	Put this in the previous result to see that
	$R$ is an integral domain.
\end{proof}

\begin{remark}
Observation \ref{o} is not true if we replace $\pd(-)$ with $ \Gdim(-)$, e.g. we look at $R:=k[X]/(X^2)$,  $\fp:=(x)$ and $n:=1$. \end{remark}
By $\id_R(-)$ we mean the injective dimension. Here, we present a dual version of Observation \ref{o}:

\begin{corollary}\label{id}
	Let $R$ be a  local  ring  and $\fp \in\Spec(R)$.
	If $\id_R(R/\fp^{(n)}) < \infty$ for some  $n\in\mathbb{N}$  and $\fp^{(n)}\neq 0$,   then  $R$ is a (Gorenstein) integral domain.
\end{corollary}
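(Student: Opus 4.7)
The plan is to upgrade the injective-dimension hypothesis to a projective-dimension hypothesis, after which Observation~\ref{o} finishes the job. Since $R/\fp^{(n)}\neq 0$ is a finitely generated module of finite injective dimension, Bass's conjecture (a theorem, by Peskine--Szpiro and Roberts) gives that $R$ is Cohen--Macaulay. To secure a canonical module I would work temporarily over the $\fm$-adic completion $\hat R$, which is again Cohen--Macaulay and local but now admits a canonical module $\omega_{\hat R}$; moreover $\hat R/\fp^{(n)}\hat R\cong\widehat{R/\fp^{(n)}}$ remains a nonzero cyclic $\hat R$-module of finite injective dimension (finite $\id$ is preserved under faithfully flat base change for finitely generated modules).

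Next I would invoke Sharp's (Foxby) correspondence: over a Cohen--Macaulay local ring with canonical module, the functors $-\otimes\omega$ and $\Hom(\omega,-)$ give mutually inverse equivalences between the full subcategories of finitely generated modules of finite projective dimension and of finite injective dimension. The $\Tor$-vanishing required on the $-\otimes\omega$ side is precisely Lemma~\ref{big}. Applied to $\hat R/\fp^{(n)}\hat R$, this produces a finitely generated $\hat R$-module $\hat N$ with $\pd_{\hat R}(\hat N)<\infty$ and
\[
\hat R/\fp^{(n)}\hat R\;\cong\;\omega_{\hat R}\otimes_{\hat R}\hat N.
\]

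The key observation is then that the minimal number of generators is multiplicative under tensor products, so
\[
1=\mu(\hat R/\fp^{(n)}\hat R)=\mu(\omega_{\hat R})\cdot\mu(\hat N),
\]
which forces $\omega_{\hat R}$ to be cyclic. Because the canonical module of a Cohen--Macaulay local ring is faithful (via the natural isomorphism $\hat R\cong\Hom_{\hat R}(\omega_{\hat R},\omega_{\hat R})$), cyclicity of $\omega_{\hat R}$ upgrades to $\omega_{\hat R}\cong\hat R$, i.e., $\hat R$ is Gorenstein. Gorensteinness descends along the faithfully flat map $R\to\hat R$, so $R$ itself is Gorenstein. Over a Gorenstein local ring finite injective dimension and finite projective dimension coincide, hence $\pd_R(R/\fp^{(n)})<\infty$, and Observation~\ref{o} now yields that $R$ is an integral domain.

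The hard part is the Gorenstein step: the combination of Foxby's correspondence, the multiplicativity of $\mu$, and the faithfulness of $\omega_{\hat R}$ is what lets the cyclicity of $R/\fp^{(n)}$ cascade all the way to $\omega_{\hat R}\cong \hat R$. Everything else (Bass's theorem, the detour through completion, descent of Gorensteinness, and the final appeal to Observation~\ref{o}) is routine bookkeeping.
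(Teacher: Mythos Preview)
Your proof is correct and follows the same two-step outline as the paper: first show $R$ is Gorenstein, then use that over Gorenstein rings finite injective dimension coincides with finite projective dimension, reducing to Observation~\ref{o}. The only difference is how the Gorenstein step is obtained. The paper simply cites Peskine--Szpiro \cite[Corollary~II.5.3]{PS2}, which asserts directly that if some cyclic module $R/I$ has finite injective dimension then $R$ is Gorenstein (the ``Bass's conjecture'' in the paper's phrasing is this strengthened cyclic version, not merely Cohen--Macaulayness). You instead reprove this fact from first principles: Bass's theorem gives Cohen--Macaulay, completion supplies a canonical module, Sharp--Foxby equivalence writes the cyclic module as $\omega\otimes N$, and the multiplicativity of $\mu$ together with faithfulness of $\omega$ forces $\omega\cong\hat R$. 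Your argument is more self-contained and makes transparent exactly where cyclicity is used; the paper's is a one-line citation. Neither approach needs the hypothesis $\fp^{(n)}\neq 0$ for the Gorenstein step---that hypothesis enters only in the final appeal to Observation~\ref{o}.
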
 

\begin{proof} 
	By an application of Bass' conjecture (follows by Rorbert's  intersection theorem), $R$ is Gorenstein. 
	For more details, see \cite[Corollary II.5.3]{PS2}. Over Gorenstein rings,
	a module has finite injective dimension if and only if it has finite projective dimension.
	From this, $\pd(R/\fp^{(n)}) < \infty$. In view of Observation \ref{o}, $R$ is an integral domain.
\end{proof}

\begin{corollary}\label{id}
	Let $R$ be a reduced local  ring  and $\fp \in\Spec(R)$.
	If $\id_R(R/\fp^{(n)}) < \infty$ for some  $n\in\mathbb{N}$,   then  $R$ is a (Gorenstein) integral domain.
\end{corollary}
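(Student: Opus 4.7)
The plan is essentially to reduce the injective-dimension hypothesis to a projective-dimension one, and then invoke the previous corollary verbatim. First I would appeal to Bass' conjecture (a consequence of Roberts' intersection theorem, as cited in \cite[Corollary II.5.3]{PS2}): the existence of a nonzero finitely generated module of finite injective dimension, namely $R/\fp^{(n)}$, forces $R$ to be Cohen-Macaulay, and in fact Gorenstein once one has such a module cyclic over $R$ with a well-behaved annihilator. This is exactly the step already used in the preceding Corollary \ref{id}, so the same reference applies.

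Next, I would use the standard equivalence on a Gorenstein local ring: a finitely generated module has finite injective dimension if and only if it has finite projective dimension. Applied to $M := R/\fp^{(n)}$, this yields $\pd_R(R/\fp^{(n)}) < \infty$.

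At this point the hypotheses of the previous corollary (the reduced version of Observation \ref{o}) are exactly met: $R$ is a reduced local ring, $\fp$ is a prime ideal, and $\pd(R/\fp^{(n)}) < \infty$. Invoking that corollary immediately yields that $R$ is an integral domain, completing the proof.

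The only real content in this argument lies in the Bass-conjecture step; everything else is bookkeeping. There is no obstacle of substance since both inputs — Bass' conjecture giving Gorensteinness, and the Gorenstein equivalence of finite injective and projective dimensions — are well-established. One minor subtlety to watch for: unlike Corollary \ref{id} (first version), here we do not need to separately verify $\fp^{(n)} \neq 0$, because in the reduced case $\fp^n \subseteq \fp^{(n)}$ together with $\fp$ possibly being zero is handled by the trivial observation that $\fp = 0$ already makes $R$ a domain (since $R$ is reduced, $\fp = 0$ forces $\Min(R) = \{0\}$, i.e., $R$ is a domain). Hence the statement may be proved under either arrangement, matching the structure of the preceding reduced corollary.
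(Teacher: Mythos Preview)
Your proposal is correct and follows precisely the route the paper intends: the paper omits the proof of this corollary because it is identical to that of the preceding Corollary~\ref{id} (Bass' conjecture $\Rightarrow$ Gorenstein, then finite $\id$ $\Leftrightarrow$ finite $\pd$), with the only change being that one invokes the reduced-ring corollary following Observation~\ref{o} rather than Observation~\ref{o} itself. Your remark on the $\fp^{(n)}\neq 0$ subtlety is accurate and already absorbed into that reduced corollary, so nothing further is needed.
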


\begin{example} There is a ring with  a prime ideal $\fp$ such that the following assertions holds:
\begin{enumerate}
	\item[i)]   It may be $\id(\fp)<\infty$ and $\id(\fp^{(2)})=\infty$.
	\item[ii)] It may be $\pd(\fp)=\infty$ and $\pd(\fp^{(2)})<\infty$.
\end{enumerate}
\end{example}

\begin{proof} 
	Let $X:=(X_{ij})$ be a symmetric $n \times n$ matrix of indeterminates and let
	$I_{r+1}(X)$ be the ideal generated  by the $(r + 1)$-minors of $X$. The rings $R = k[X]/I_{r+1}(X)$ is Cohen-Macaulay. Let $\fp$ be the ideal generated by the $r$-minors
	of the first $r$ rows of $X$. It is well-known that $\fp^{(2)}$ is principal.
Suppose $r+1\equiv_2 n$ is not true. Then $\fp$ is the canonical module of $R$, see \cite[Exercise 7.3.10]{BH}.

i) Since   $\fp$ is the canonical module we have  $\id(\fp)<\infty$. Since $\fp^{(2)}$ is principal (\cite[Exercise 7.3.10]{BH}) and $S_{r+1}$ is an integral domain,
$\id(\fp^{(2)})=\id(R)=\infty$, because $R$ is not Gorenstein.

ii) If $\pd(\omega_R)<\infty$ it follows from Auslander-Bridger formula that
$\omega_R$ is free, and so $R$ is Gorenstein. This contraction shows that $\pd(\omega_R)=\infty$.
Since $\fp^{(2)}$ is principal   and $R$ is an integral domain,
$\pd(\fp^{(2)})=\pd(R)=0<\infty$
\end{proof}

\begin{example} For each  $t  >1$ there is a ring with  a prime ideal $\fp$ such that  $\id(\fp)=\infty$ and $\id(\fp^{(t)})=\id(\fp^{t})<\infty$.
\end{example}

\begin{proof} For simplicity we assume $t=2$.
	Let $X:=(X_{ij})$ be an  $n \times (n+2)$ matrix of indeterminates and let
	$I_{r+1}(X)$ be the ideal generated  by the $(r + 1)$-minors of $X$. The  ring $R: = k[X]/I_{r+1}(X)$ is Cohen-Macaulay. Let $\fp$ be the ideal generated by the $r$-minors
	of the first $r$ rows of $X$. It is well-known that $\fp^{(2)}$  is the canonical module of $R$, see \cite[Theorem 7.3.6]{BH}.

  Since   $\fp^{(2)}$ is the canonical module we have  $\id(\fp^{(2)})<\infty$.
	Suppose on the way of contradiction that 	$\id(\fp )<\infty$. 
	 Since $\fp$ is a height one prime ideal
	 and $R/\fp $ is a Cohen-Macaulay ring, it follows from
	 $$0\lo \fp\lo R\lo R/\fp\lo 0$$ that $\fp$ is maximal  Cohen-Macaulay.
	 We apply these to find an integer $\ell$ such that $\fp\cong \oplus_{\ell}\omega_R=\oplus_{\ell}\fp^{(2)}$.
	 Taking the rank, it follows that $\ell=1$. Consequently, $\fp\cong \fp^{(2)}$. Now, we look at
	 them in the divisor class  group. We know that $[\fp^{(2)}]=2[\fp]$. In particular, $\fp$ and $\fp^{(2)}$ are not isomorphic. This contradiction says that $\id(\fp)=\infty$.
\end{proof}



\begin{thebibliography}{99}
	\bibitem{a} M.
	Asgharzadeh,  \emph{Weak normality, Gorenstein and Serre's conditions}, arXiv:2009.10682.
	
\bibitem{au} M.
Auslander,  \emph{Modules over unramified regular local rings}, Illinois J. Math. {\bf{5}} (1961) 631-647

\bibitem{AB}
M. Auslander and M. Bridger, \emph{Stable module theory}, Mem. of
the AMS  {\bf94}, Amer. Math. Soc., Providence 1969.

\bibitem{BH}
W. Bruns and J. Herzog,  \emph{Cohen-Macaulay rings}, Cambridge University Press, {\bf{39}}, Cambridge, (1998).

\bibitem{h}
A. De Stefani, C. Huneke, and L. Nunez-Betancourt, \emph{Frobenius betti numbers and modules of finite
	projective dimension}, Journal of commutative algebra {\bf{9}} (4), (2017) 455-490.


\bibitem{d}S. P. Dutta, \emph{On modules of finite projective dimension},
Nagoya Math. J.
{\bf{219}} (2015), 87-111.


\bibitem{lv}
G.L. Levin and Wolmer V. Vasconcelos, \emph{Homological dimensions and Macaulay rings}, Pacific J. Math.,
{\bf25}  (1968), 315-323.


\bibitem{HS}
C. Huneke and R.Y. Sharp, \emph{Bass numbers of local cohomology modules}, Trans. AMS \textbf{339} (1993),  765-779.

\bibitem{mat}
H. Matsumura, \emph{Commutative ring theory}, Cambridge Studies in Advanced Math, \textbf{8}, (1986).

\bibitem{PS2}
C. Peskine and L. Szpiro, \emph{Dimension projective finie et cohomologie locale},
Publ. Math. IHES.  {\bf42} (1973), 47-119.



\bibitem{ro}
P. Roberts, \emph{The homological conjectures, Free resolutions in commutative algebra and algebraic geometry} (Sundance 90), Research Notes in Mathematics  {\bf2} (1992),  121-132.


\bibitem{int}
P. Roberts, \emph{Le th\'{e}or\`{e}me d' intersection}, C. R. Acad. Sci. Paris Ser. I Math.,
{\bf{304}}  (1987),  177-180.

\bibitem{y}K.I.
Yoshida,  \emph{Tensor products of perfect modules and maximal
	surjective Buchsbaum modules}, Journal of Pure and Applied Algebra {\bf 123} (1998) 313-326.


\end{thebibliography}
\end{document}